\newcommand{\be}{\begin{eqnarray*}}
	\newcommand{\en}{\end{eqnarray*}}
\newcommand{\bes}{\begin{eqnarray}}
\newcommand{\ens}{\end{eqnarray}}
\def\nn{\nonumber}
\newcommand{\al}{\alpha}
\newcommand{\la}{\lambda}
\newcommand{\ep}{\epsilon}
\newtheorem{theorem}{Theorem}[section]
\newtheorem{lemma}{Lemma}[section]
\newtheorem{remark}{Remark}[section]
\def\bq{\begin{equation}}
\def\eq{\end{equation}}
\def\bqq{\begin{eqnarray*}}
	\def\eqq{\end{eqnarray*}}
\def\nn{\nonumber}
\title{\bf Approximation of mild solutions of a semilinear fractional elliptic equation with random noise    }
\author{  Ho Duy Binh $^1$, Erkan Nane $^2$ and Nguyen Huy Tuan $^3$, \footnote{Corresponding author: \url{nguyenhuytuan@tdt.edu.vn }} \\\\
\small $^1$ Faculty of Fundamental Science, Nguyen Hue University, Dong Nai, Viet Nam\\
	\small $^2$ Department of Mathematics and Statistics, Auburn University, Auburn, USA  \\
\small $^{3}$ Applied Analysis Research Group,
Faculty of Mathematics and Statistics,\\
\small Ton Duc Thang University, Ho Chi Minh City, Vietnam\\ \\
}
\begin{document}
	\date{}
	\maketitle
	
	\begin{abstract}
		We study for the first time the  Cauchy problem for semilinear fractional elliptic equation. This paper is concerned with the Gaussian white noise  model  for the initial Cauchy data. We establish the ill-posedness of the problem. Then,  under some  assumption on the exact solution, we propose the Fourier truncation method for stabilizing the
		ill-posed problem. Some convergence rates between the exact solution and the regularized solution is established in $L^2$ and $H^q$ norms. 		
	\end{abstract}

		%\nocite{*}

	\section{Introduction}
	The theory of fractional differential equations has received much attention over the past twenty years, since they are
	important in describing the natural models such as diffusion processes, stochastic processes, finance and hydrology. We refer
	for instance to the books \cite{Kilbas,  mark-skorski-12, podlubny-1999, samko}.
	In this paper, we  consider the following Cauchy problem of fractional semi-linear elliptic equations:{ }
	\begin{equation} \label{problem}
		\frac{D^{\beta}\mathbf{u}\left(t,y\right)}{Dt^{\beta}}=\mathcal{A} \mathbf{u}\left(t,y\right)+G\left(t,y,\mathbf{u}\left(t,y\right)\right),\quad\left(t,y\right)\in\Omega:=\Omega_{1}\times\Omega_{2},
	\end{equation}
	associated with the zero Dirichlet boundary condition in $y$ and
	the initial data and nonhomogeneous initial velocity given by
	\begin{equation} \label{condition}
		\mathbf{u}\left(0,y\right)=\mathbf{u}_{0}\left(y\right),\quad\frac{d\mathbf{u}\left(t,y\right)}{dt}\bigg|_{t=0}=\mathbf{u}_{1}\left(y\right),\quad y\in\Omega_{2}.
	\end{equation}
	 In \eqref{problem}, $\beta \in (1,2)$ is the
	 fractional order and $	\frac{D^{\beta}}{Dt^{\beta}}$ denotes the Caputo fractional derivative with respect to $t$,
	 (see \cite{yamamoto,yamamoto1}),
	 \[
	 	\frac{D^{\beta}\mathbf{u}\left(t,y\right)}{Dt^{\beta}}:=\frac{1}{\Gamma(2-\beta)}\int_0^t(t-\eta)^{1-\beta}
	 \frac{\partial^{2} \mathbf{u}}{\partial \eta^{2}} (\eta, y) d\eta,
	 \]
	 where $\Gamma$ is the Gamma function.
The function  $\mathbf{u}:\Omega_{1}\to L^{2}\left(\Omega_{2}\right)$ denotes
	the distribution of a body where $\Omega_{1}:=\left(0,a\right)\subset\mathbb{R}$
	and $\Omega_{2}\subset\mathbb{R}^{n}$ are open, bounded and connected
	domains with a smooth boundary for $n\ge2$ and $a>0$, and $\mathcal{A}$
	is the linear second-order differential operator with variable coefficients
	depending on $y$ only:
	\[
	\mathcal{A}\mathbf{u}\left(t,y\right)=\mathcal{A}_y \mathbf{u}\left(t,y\right)=\sum_{i,j=1}^{n}\frac{\partial}{\partial y_{i}}\left(d_{i,j}\left(y\right)\frac{\partial\mathbf{u}\left(t,y\right)}{\partial y_{j}}\right)+d\left(y\right){\bf u}\left(t,y\right).
	\]
	
	The basic requirement for the coefficients $d_{i,j}\left(y\right)$
	and $d\left(y\right)$ is that $\mathcal{A}$ is a positive, self-adjoint
	operator in the Hilbert space $L^{2}\left(\Omega_{2}\right)$. Consequently,
	there exists an orthonormal basis of $L^{2}\left(\Omega_{2}\right)$,
	denoted by $\left\{ \phi_{p}\right\} _{p\in\mathbb{N}^{*}}$, satisfying
	\begin{equation}
		\phi_{p}\in H_{0}^{1}\left(\Omega_{2}\right)\cap C^{\infty}\left(\overline{\Omega_{2}}\right),\quad\mathcal{A} \phi_{p}\left(y\right)=\lambda_{p}\phi_{p}\left(y\right)\;\text{for}\;y\in\Omega_{2},\label{eq:eigen1}
	\end{equation}
	and the corresponding discrete spectrum $\left\{ \lambda_{p}\right\} _{p\in\mathbb{N}^{*}}$
	satisfies
	\begin{equation}
		0<\lambda_{1}\le\lambda_{2}\le...\lim_{p\to\infty}\lambda_{p}=\infty.\label{eigen2}
	\end{equation}
	A related fractional elliptic equation with homogeneous source term, i.e, $G=0$ in Eqs \eqref{problem}-\eqref{condition}
	has been introduced  in section 4.2 in  \cite{jin} where  the authors  established the ill-posedness  of the problem in the sense of Hadamard \cite{hada}.
 This means that a solution of Problem  \eqref{problem}-\eqref{condition}  corresponding to the data does not always exist, and in the case of existence, it
	does not depend continuously on the given data. In fact, from
	small noise contaminated physical
	measurements, the corresponding solutions will have large
	errors. Hence, one has to resort to a
	regularization.  In \cite{jin}, the authors did not mention the regularization results for this problem.

	{  { If we replace the operator $\mathcal{A}$ by $-\mathcal{A}$ in equation \eqref{problem} then we get the fractional wave equation which is studied in \cite{yamamoto}. }As introduced in \cite{yamamoto}, the kinds of the equation \eqref{problem} have many applications in anamolous diffusion phenomenon and in heterogeneous media.  Some more physical applications can be found in \cite{yamamoto}.   }

	Until now, to the best of  our knowledge, there are no results concerning  a regularization  for  the   nonlinear problem \eqref{problem}-\eqref{condition}.
	Motivated by this reason, in this paper, we study the regularization results for \eqref{problem}-\eqref{condition}. 	In addition, one usually meets the measurement in practice, i.e. we
	need to assume the presence of an approximation $\left(\mathbf{u}_{0}^{\ep},\mathbf{u}_{1}^{\ep}\right)\in L^{2}\left(\Omega_{2}\right)\times L^{2}\left(\Omega_{2}\right)$. If the errors are generated from uncontrollable
	sources (or called external reason) as environment, wind, rain, humidity, etc, then the model is random. As we know, the problem with random data is more  difficult than the deterministic case. Hence, we  study  the problem \eqref{problem}-\eqref{condition} with the  following  random model
	\begin{equation}
	\mathbf{u}_{0}^{\ep}(y)=\mathbf{u}_{0}(y)+\ep \xi(y) ,~~	\mathbf{u}_{1}^{\ep}(y)=\mathbf{u}_{1}(y)+\ep \xi(y) \label{obs11111}
	\end{equation}
	in which the constant $\varepsilon>0$ represents the upper bound
	of the noise level in $L^{2}\left(\Omega_{2}\right)$.	And $\xi$
	is a Gaussian white noise process.
	In practice, we only obtain finite errors as follows
	\begin{equation}
	\left \langle		\mathbf{u}_{0}^{\ep} , \phi_p \right \rangle = \left \langle		\mathbf{u}_{0} , \phi_p \right \rangle  +\ep \left \langle	\xi , \phi_p \right \rangle ,\quad \left \langle		\mathbf{u}_{1}^{\ep} , \phi_p \right \rangle = \left \langle		\mathbf{u}_{1} , \phi_j \right \rangle  +\ep \left \langle	\xi , \phi_p \right \rangle, \quad  p=\overline {1,{\bf N}}.  \label{obs222222}
	\end{equation}
	where ${\bf N} $  is the  natural number which is the number of steps for discrete observations. Our task here is to find a regularized solution (called the estimator) ${\bf u}_{\text re}$ for ${\bf u}$
	and then investigate the rate of
	convergence ${\bf E} \|{\bf u}_{\text re} -{\bf u}  \| $
	, which is called the mean integrated square error (MISE). Here ${\bf E}$
	denotes the expectation w.r.t. the distribution of the data
	in the model \eqref{obs11111}.

	If $G=0$ in Eqs \eqref{problem} and ${\bf u_1}=0$ in \eqref{condition}, using   \eqref{mildsolution}, we can see that the  solution to \eqref{problem}-\eqref{condition}   satisfies a following linear operator with   random noise defined in \eqref{obs11111}
	{
\begin{equation}
		\mathcal 	K {\bf u}(a,y)+\text{"random noise"}= {\bf u}_0(y), \label{linear}
	\end{equation}

	where $ \mathcal K v=\sum_{p=1}^{\infty} \frac{1} {E_{\beta,1} \left(\la_p a^\beta\right)}\left\langle v,\phi_{p}\right\rangle $.} The  linear random model \eqref{obs11111}-\eqref{linear} is one of many linear inverse problems  in  statistics which  have been studied by
	well-known methods  including spectral cut-off (or called truncation method)  \cite{Bi2,Cavalier,Mair,Hohage},  the Tiknonov method \cite{Cox}, iterative regularization methods \cite{Engl}.  For the nonlinear problem, we can not transform  \eqref{problem}-\eqref{condition} into \eqref{linear}. Hence, previous techniques for solving \eqref{linear} are not suitable for solving the nonlinear problem \eqref{problem}-\eqref{condition}.
The main idea in this paper  is to approximate
the initial  data
$\left(\mathbf{u}_{0},\mathbf{u}_{1}\right)$
by an  approximate
data and use this function to establish a solution of a regularized problem by truncation method.

%{\color{red}  You are right. The problem  (1.5)—(1.7) is not inverse problem,  it is  a ill-posed problem . }

This paper is organized as follows. In section 2, we present a mild solution and show the ill-posedness of the solution to fractional semilinear elliptic equation. In section 3, we establish a regularized solution and investigate the convergence rates of the expectation of  the difference for  the solution and the regularized solution in  $L^2$ and in the Sobolev spaces $H^q$ for $q>0$.

	\section{The mild solution of  Cauchy problem for fractional elliptic equation}

	Suppose that problem \eqref{problem}-\eqref{condition} has  a mild solution ${\bf u}$ which has the form
	$
	{\bf u}(t,y)= \sum_{p=1}^\infty {\bf u}_p(t) \phi_p(y).
	$
	Then  the function ${\bf u}_p(t)$ solves  the following ordinary differential equation
	\begin{equation}
		\label{problem01}
		\left\{\begin{array}{l l l}
			\frac{D^{\beta}\mathbf{u}_p\left(t\right)}{Dt^{\beta}} -  \la_p   {\bf u}_p(t) & = 	\left\langle G(t,y, {\bf u}(t,\cdot)),\phi_p \right\rangle,\\
			{\bf u}_p(0) & = \left\langle {\bf u}_0,\phi_p\right\rangle\\
			\frac{d}{dt} {\bf u}_p(0) & = \left\langle {\bf u_1},\phi_p\right\rangle\\
		\end{array}
		\right.
	\end{equation}
	By applying the method in \cite{yamamoto, yamamoto1}, we obtain	the solution of \eqref{problem01} as follows
	\begin{align}
		{\bf u}_p(t)&= E_{\beta,1}(\lambda_p t^\beta)
		\left\langle {\bf u}_0,\phi_p\right\rangle+ t E_{\beta,2}(\lambda_p t^\beta)
		\left\langle {\bf u_1},\phi_p\right\rangle\nn\\
		&+ \int_0^t (t-\eta)^{\beta-1} E_{\beta,\beta}
		(\lambda_p (t-\eta)^\beta) 	\left\langle G(t,\eta, {\bf u}(t,\cdot)),\phi_p\right\rangle d\eta
	\end{align}
	and ${\bf u}$ is given by
	\begin{align} \label{mildsolution}
		{\bf u}(t,y) &= \sum_{p=1}^\infty \Big[ E_{\beta,1}(\lambda_p t^\beta)
		\left\langle{\bf u_0},\phi_p \right\rangle+ t E_{\beta,2}(\lambda_p t^\beta)
		\left\langle {\bf u_1},\phi_p\right\rangle \Big]\phi_p(y) \nn\\
		&+ \sum_{p=1}^\infty \Big[\int_0^t (t-\eta)^{\beta-1} E_{\beta,\beta}
		(\lambda_p (t-\eta)^\beta) 	\left\langle G(t,\eta, {\bf u}(t,\cdot)), \phi_p\right\rangle d\eta \Big]\phi_p(y)
	\end{align}
	
	Next we give some  lemmas that will be useful in this paper.
	
	\begin{lemma} \label{lem1}
		Let $0< \beta_0 <\beta_1<2 $ and $\beta \in [\beta_0, \beta_1]$. Then for $z \in \mathbb R, z \ge 0$ then
		\begin{equation}
			\frac{\widetilde  C }{\beta} e^{z^{\frac{1}{\beta}}} \le E_{\beta, 1}(z)	 \le \frac{\overline   C }{\beta} e^{z^{\frac{1}{\beta}}}.
		\end{equation}
	\end{lemma}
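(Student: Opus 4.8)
The plan is to compare $E_{\beta,1}(z)$ with its leading growth $\frac{1}{\beta}e^{z^{1/\beta}}$ by studying the ratio
$$R(\beta, z) := \frac{\beta\, E_{\beta,1}(z)}{e^{z^{1/\beta}}},$$
and to show that $R$ is bounded above and below by positive constants, uniformly for $\beta \in [\beta_0,\beta_1]$ and $z \ge 0$. A lower bound $\widetilde C$ and an upper bound $\overline C$ for $R$ then give the claimed inequalities at once after multiplying through by $\frac{1}{\beta}e^{z^{1/\beta}}$. Since $z \ge 0$, every term of the series $E_{\beta,1}(z) = \sum_{k\ge 0} z^k/\Gamma(\beta k+1)$ is nonnegative and the $k=0$ term equals $1$, so $E_{\beta,1}(z) \ge 1 > 0$; together with $e^{z^{1/\beta}} \ge 1$ this shows that $R$ is well defined and strictly positive.

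First I would dispose of large $z$ using the classical asymptotic expansion of the Mittag-Leffler function on the positive real axis: for $0 < \beta < 2$,
$$E_{\beta,1}(z) = \frac{1}{\beta}e^{z^{1/\beta}} - \sum_{k=1}^{N}\frac{z^{-k}}{\Gamma(1-\beta k)} + O\!\left(z^{-N-1}\right), \qquad z \to +\infty.$$
Taking $N = 1$ and multiplying by $\beta e^{-z^{1/\beta}}$ yields $R(\beta,z) = 1 + O\!\left(z^{-1}e^{-z^{1/\beta}}\right) \to 1$. The delicate point is that this convergence must be \emph{uniform} in $\beta$: since the expansion is obtained from a Hankel-contour integral whose path and remainder estimates depend continuously on $\beta$, the implied constants can be chosen uniformly over the compact interval $[\beta_0,\beta_1]\subset(0,2)$. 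Consequently there is a threshold $Z_0 > 0$, independent of $\beta$, with $\tfrac12 \le R(\beta,z) \le \tfrac32$ for all $z \ge Z_0$ and all $\beta \in [\beta_0,\beta_1]$.

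It remains to treat the bounded range $z \in [0, Z_0]$. On the compact rectangle $[\beta_0,\beta_1]\times[0,Z_0]$ the map $(\beta,z) \mapsto R(\beta,z)$ is continuous — the defining series converges uniformly on compacts, $\beta \mapsto \Gamma(\beta k+1)$ is continuous and nonvanishing, and $(\beta,z)\mapsto e^{z^{1/\beta}}$ is continuous and bounded away from $0$ — and it is strictly positive by the remark above. Hence $R$ attains a positive minimum $m>0$ and a finite maximum $M$ there. Combining the two regimes, I set $\widetilde C := \min\{m, \tfrac12\}$ and $\overline C := \max\{M, \tfrac32\}$, which are positive constants depending only on $\beta_0,\beta_1$; then $\widetilde C \le R(\beta,z) \le \overline C$ for every $z \ge 0$ and $\beta \in [\beta_0,\beta_1]$, which is exactly the assertion.

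The main obstacle is the uniformity of the asymptotics in $\beta$ in the second paragraph; everything else is a soft compactness and continuity argument. If a clean reference for the uniform remainder is unavailable, I would instead establish the large-$z$ bounds directly from the real integral representation of $E_{\beta,1}$ (the exponential saddle term plus a correction integral that is $O(1/z)$), whose kernel is continuous in $\beta$ and can therefore be estimated uniformly on $[\beta_0,\beta_1]$.
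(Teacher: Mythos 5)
Your proof is correct in substance, but it cannot be compared line-by-line with the paper's, because the paper offers no argument for this lemma at all: its entire proof is the sentence ``The proof can be found in \cite{Trong}.'' So the comparison is between your self-contained argument and an appeal to the literature. Your route --- normalizing to $R(\beta,z)=\beta E_{\beta,1}(z)e^{-z^{1/\beta}}$, invoking the classical expansion $E_{\beta,1}(z)=\frac{1}{\beta}e^{z^{1/\beta}}-\sum_{k=1}^{N}\frac{z^{-k}}{\Gamma(1-\beta k)}+O(z^{-N-1})$ on the positive real axis to get $R(\beta,z)\to 1$ for large $z$, and then using continuity and positivity of $(\beta,z)\mapsto R(\beta,z)$ on the compact set $[\beta_0,\beta_1]\times[0,Z_0]$ for the remaining range --- is the natural one, and it is essentially the type of argument carried out in the cited reference. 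The one load-bearing step you assert rather than prove is the uniformity in $\beta$ of the asymptotic remainder over $[\beta_0,\beta_1]$; you correctly identify this as the crux (without it the threshold $Z_0$ could degenerate as $\beta$ varies, and the compactness step would not glue the two regimes), and your stated reason --- that the Hankel-contour representation and its remainder bounds vary continuously in $\beta$, the positive real axis lying strictly inside the sector of exponential growth for every $\beta\in(0,2)$ --- is the correct one and can be made rigorous. What your approach buys is a checkable proof with explicit constants $\widetilde C=\min\{m,1/2\}$ and $\overline C=\max\{M,3/2\}$ depending only on $\beta_0,\beta_1$, which is strictly more informative than what the paper provides; what the paper's citation buys is brevity, delegating exactly this uniformity bookkeeping to \cite{Trong}.
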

	\begin{proof}
		The proof can be found in \cite{Trong}.
	\end{proof}
	Now, we have the following Lemma
	\begin{lemma} \label{qt}
		 Let $0<\beta<2$ and $t \in [0,a]$. Then there exists $C_1, C_2, C_3$ which does not depend on $t$,	such that
		\begin{align}
			E_{\beta,1} (\la_p t^\beta ) \le C_1 \exp \big( \la_p^{\frac{1}{\beta} }t   \big) \label{bdt1} \\
			t	E_{\beta,2} (\la_p t^\beta ) \le  C_2  \Big( 1+ \la_p^{\frac{-1}{\beta}}  \Big) \exp \Big( \la_p^{\frac{1}{\beta}} t \Big) \label{bdt2} \\
			t^{\beta-1}	E_{\beta,\beta} (\la_p t^\beta ) \le C_3 \exp \Big( \la_p^{\frac{1}{\beta}} t \Big) \label{bdt3}
		\end{align}
%{\color{red} OK This is correct}
	\end{lemma}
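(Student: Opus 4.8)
The plan is to derive all three estimates from the sharp two-sided bound of Lemma \ref{lem1}, using nothing more than the substitution $z=\la_p t^\beta$ together with elementary term-by-term comparisons of the defining power series $E_{\beta,\mu}(z)=\sum_{k=0}^\infty z^k/\Gamma(\beta k+\mu)$. The key observation is that the substitution $z=\la_p t^\beta\ge 0$ turns the exponent into $z^{1/\beta}=\la_p^{1/\beta}t$, which is exactly the exponential rate appearing on the right-hand sides of \eqref{bdt1}--\eqref{bdt3}. Thus the whole task reduces to controlling the algebraic prefactors.

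For \eqref{bdt1} there is nothing to do beyond this substitution: applying the upper estimate of Lemma \ref{lem1} with $z=\la_p t^\beta$ gives $E_{\beta,1}(\la_p t^\beta)\le(\overline C/\beta)\,e^{\la_p^{1/\beta}t}$, so \eqref{bdt1} holds with $C_1=\overline C/\beta$. For \eqref{bdt2} I would first reduce $E_{\beta,2}$ to $E_{\beta,1}$. Since $\Gamma(\beta k+2)=(\beta k+1)\Gamma(\beta k+1)\ge\Gamma(\beta k+1)$ for every $k\ge 0$ (because $\beta k+1\ge 1$), comparing the two series term by term yields $E_{\beta,2}(z)\le E_{\beta,1}(z)$ for all $z\ge 0$. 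Hence $t\,E_{\beta,2}(\la_p t^\beta)\le t\,E_{\beta,1}(\la_p t^\beta)\le a(\overline C/\beta)e^{\la_p^{1/\beta}t}$ using $t\le a$, and since $1\le 1+\la_p^{-1/\beta}$ this gives \eqref{bdt2}.

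For \eqref{bdt3} the naive term-by-term comparison breaks down, because $\Gamma$ dips below $1$ on $(1,2)$ and $\Gamma(\beta k+\beta)$ need not dominate $\Gamma(\beta k+1)$ at small indices (already at $k=0$ one has $\Gamma(\beta)<1=\Gamma(1)$). Instead I would bound the ratio uniformly: the map $k\mapsto\Gamma(\beta k+1)/\Gamma(\beta k+\beta)$ is continuous and positive on $[0,\infty)$ and, by the standard asymptotics $\Gamma(x+a)/\Gamma(x+b)\sim x^{a-b}$, tends to $0$ as $k\to\infty$ (here the exponent is $1-\beta<0$), so it is bounded by some $M=M(\beta)<\infty$. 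This gives $E_{\beta,\beta}(z)\le M\,E_{\beta,1}(z)$ for $z\ge 0$, and then, recalling $\beta\in(1,2)$ so that $t^{\beta-1}\le a^{\beta-1}$, I get $t^{\beta-1}E_{\beta,\beta}(\la_p t^\beta)\le a^{\beta-1}M(\overline C/\beta)e^{\la_p^{1/\beta}t}$, which is \eqref{bdt3}.

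The only genuinely delicate step is this last uniform control of the Gamma-ratio for $E_{\beta,\beta}$: one must check that the ratio stays bounded across the whole range of $k$, in particular over the small values where $\Gamma$ is non-monotone, rather than leaning on a domination that is simply false there. I would also note that the extra factor $1+\la_p^{-1/\beta}$ in \eqref{bdt2} is harmless: since $\la_p\ge\la_1>0$ by \eqref{eigen2}, the quantity $1+\la_p^{-1/\beta}$ is bounded above, so the crude comparison above is in fact equivalent, up to adjusting $C_2$, to the stated inequality; if one insisted on the sharp $\la_p^{-1/\beta}$ decay one would invoke the full global asymptotic expansion of $E_{\beta,2}$, but that refinement is unnecessary for establishing \eqref{bdt2} as written.
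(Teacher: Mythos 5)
Your proof is correct, but it takes a genuinely different route from the paper's. The paper does not reduce anything to Lemma \ref{lem1}: it invokes Proposition 2.5 of \cite{Milos}, a bound valid for an arbitrary second parameter $\gamma$, namely $E_{\beta,\gamma}(wt^\beta)\le C_{\beta,\gamma}\big(1+w^{\frac{1-\gamma}{\beta}}\big)\big(1+t^{1-\gamma}\big)\exp\big(w^{\frac{1}{\beta}}t\big)$, and then specializes to $\gamma=1,2,\beta$ with $w=\la_p$, absorbing the singular factors $1+t^{-1}$ and $1+t^{1-\beta}$ through the multiplications by $t$ and $t^{\beta-1}$ together with $t\le a$ and $\la_p\ge\la_1$. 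You instead bootstrap all three estimates from the paper's Lemma \ref{lem1} (the $\gamma=1$ case) by elementary series comparison: $\Gamma(\beta k+2)=(\beta k+1)\Gamma(\beta k+1)\ge\Gamma(\beta k+1)$ gives $E_{\beta,2}\le E_{\beta,1}$, and a uniform bound $M(\beta)$ on the ratio $\Gamma(\beta k+1)/\Gamma(\beta k+\beta)$ --- where you rightly observe that naive term-by-term domination fails at small $k$ because $\Gamma(\beta)<1$ --- gives $E_{\beta,\beta}\le M\,E_{\beta,1}$. What each approach buys: yours is self-contained modulo Lemma \ref{lem1} and elementary Gamma facts, and it isolates exactly why the $E_{\beta,\beta}$ case is the delicate one; the paper's is shorter given the citation and produces the prefactor $\big(1+\la_p^{-1/\beta}\big)$ in \eqref{bdt2} directly rather than as a trivially-true weakening. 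One point worth flagging, common to both proofs: \eqref{bdt3} genuinely requires $\beta>1$ (your step $t^{\beta-1}\le a^{\beta-1}$, the paper's ``noting that $\beta>1$''); for $\beta<1$ the left-hand side blows up as $t\to 0^{+}$ while the right-hand side stays bounded, so the hypothesis $0<\beta<2$ in the lemma's statement is too generous, and the correct range is the paper's standing assumption $\beta\in(1,2)$.
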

	
	\begin{proof}
		Applying Proposition 2.5 in \cite{Milos},  we obtain
		\begin{align} \label{bdt1}
			E_{\beta, \gamma} (w t^\beta) \le C_{\beta, \gamma} \Big( 1+ w^{\frac{1-\gamma}{\beta}}  \Big)  \Big(  1+ t^{1-\gamma}\Big)  \exp \Big( w^{\frac{1}{\beta}} t \Big),~~w \ge 0, t \ge 0.
		\end{align}
		Let $w= \la_p  $ and $\gamma=1$ into \eqref{bdt1}, we get
		\begin{align}
			E_{\beta,1} (\la_p t^\beta )  \le 4C_{\beta, \gamma} \exp \Big( w^{\frac{1}{\beta}} t \Big)= C_1 \exp \Big( \la_p^{\frac{1}{\beta}} x \Big) .
		\end{align}
		Let $w= \la_p  $ and $\gamma=2$ into \eqref{bdt1}, we get
		\begin{align}
			E_{\beta,2} (\la_p t^\beta )  \le C_{\beta, \gamma} \Big( 1+ \la_p^{\frac{-1}{\beta}}  \Big)  \Big(  1+ t^{-1}\Big)  \exp \Big( \la_p^{\frac{1}{\beta}} t \Big).
		\end{align}
		Multiplying both sides of the latter inequality with  $x$, we obtain
		\begin{align}
			x 	E_{\beta,2} (\la_p t^\beta )  \le C_{\beta, \gamma} \Big( 1+ \la_p^{\frac{-1}{\beta}}  \Big)  \Big(  1+ a\Big)  \exp \Big( w^{\frac{1}{\beta}} t \Big)= C_2  \Big( 1+ \la_p^{\frac{-1}{\beta}}  \Big) \exp \Big(\la_p^{\frac{1}{\beta}} t \Big).
		\end{align}
		Let $w= \la_p  $ and $\gamma=\beta$ into \eqref{bdt1}, we get
		\begin{align}
			E_{\beta,\beta} (\la_p t^\beta )  \le C_{\beta, \gamma} \Big( 1+ \la_p^{\frac{1-\beta}{\beta}}  \Big)  \Big(  1+ t^{1-\beta}\Big)  \exp \Big( \la_p^{\frac{1}{\beta}} t \Big).
		\end{align}
		Multipying bothsides of the latter inequality to $t^{\beta-1}$ and noting that $\beta>1$, we obtain
		\begin{align}
			t^{\beta-1}	E_{\beta,\beta} (\la_p t^\beta )  &\le C_{\beta, \gamma} \Big( 1+ \la_p^{\frac{1-\beta}{\beta}}  \Big)  \Big(  1+ t^{\beta-1}\Big)  \exp \Big( w^{\frac{1}{\beta}} t \Big) \nn\\
			&\le \underbrace{ C_{\beta, \gamma}	\Big(  1+ a^{\beta-1}\Big)  \Big( 1+ \la_1^{\frac{1-\beta}{\beta}}  \Big)}_{C_3}   \exp \Big( \la_p^{\frac{1}{\beta}} t \Big)  .
		\end{align}
	\end{proof}
	
	\subsection{The ill-posedness of problem \eqref{problem}-\eqref{condition}  with random noise}

	In this section, we show that the problem (1.1)-(1.2) in a special case  with random noise is ill-posed in the sense of Hadamard.

%{\color{red}  problem (1.1)-(1.2) in the deterministic case is a Cauchy problem, not a inverse problem. However, it is ill-posed as introduced by page 6, \cite{jin} }

	\begin{theorem} \label{theorem2.1}
		Problem  \eqref{problem}-\eqref{condition} is ill-posed in the sense of Hadamard.
	\end{theorem}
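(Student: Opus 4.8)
The plan is to prove ill-posedness by showing that the stability requirement in Hadamard's definition fails: I will produce a sequence of Cauchy data that tends to zero while the associated solutions, read off at the endpoint $t=a$, diverge in $L^2(\Omega_2)$. Since it is enough to exhibit one instance of \eqref{problem}-\eqref{condition} for which the data-to-solution map is discontinuous, I would first specialize to the linear homogeneous situation $G\equiv 0$ and $\mathbf{u}_1\equiv 0$, so that the representation \eqref{mildsolution} reduces to $\mathbf{u}(t,y)=\sum_{p=1}^{\infty}E_{\beta,1}(\lambda_p t^\beta)\langle \mathbf{u}_0,\phi_p\rangle\phi_p(y)$. The whole mechanism of ill-posedness is then carried by the exponential growth of $E_{\beta,1}$ quantified in Lemma \ref{lem1}.

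Next I would build the destabilizing sequence. For each integer $k\ge 1$ set $\mathbf{u}_0^{(k)}:=\big(E_{\beta,1}(\lambda_k a^\beta)\big)^{-1/2}\phi_k$, a single high-frequency eigenmode with vanishing amplitude. Orthonormality of $\{\phi_p\}$ gives $\|\mathbf{u}_0^{(k)}\|_{L^2(\Omega_2)}=\big(E_{\beta,1}(\lambda_k a^\beta)\big)^{-1/2}$, and the lower estimate of Lemma \ref{lem1} combined with $\lambda_k\to\infty$ from \eqref{eigen2} yields $E_{\beta,1}(\lambda_k a^\beta)\ge \frac{\widetilde C}{\beta}e^{a\lambda_k^{1/\beta}}\to\infty$; hence $\|\mathbf{u}_0^{(k)}\|_{L^2(\Omega_2)}\to 0$. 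Evaluating the corresponding solution at $t=a$, only the $k$-th mode survives, so $\mathbf{u}^{(k)}(a,y)=\big(E_{\beta,1}(\lambda_k a^\beta)\big)^{1/2}\phi_k(y)$ and therefore $\|\mathbf{u}^{(k)}(a,\cdot)\|_{L^2(\Omega_2)}=\big(E_{\beta,1}(\lambda_k a^\beta)\big)^{1/2}\to\infty$. Thus arbitrarily small data generate solutions of arbitrarily large norm, which is precisely the breakdown of continuous dependence and establishes ill-posedness in the sense of Hadamard.

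To match the random formulation advertised in this section, I would finally transfer the same amplification to the model \eqref{obs11111}. Because the white-noise coefficients $\langle\xi,\phi_p\rangle$ are independent standard Gaussians with $\mathbf{E}|\langle\xi,\phi_p\rangle|^2=1$, the noise contribution to the solution at $t=a$ has expected squared $L^2$ norm bounded below by $\epsilon^2\sum_{p=1}^{\infty}E_{\beta,1}(\lambda_p a^\beta)^2$, and the exponential lower bound of Lemma \ref{lem1} forces this series to diverge; consequently the naive reconstruction does not even belong to $L^2(\Omega_2)$ in mean square, a strong form of instability.

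The main obstacle is conceptual rather than computational: every inequality needed here is an immediate consequence of the two-sided Mittag-Leffler bounds of Lemmas \ref{lem1} and \ref{qt}, so the real point is to pick an amplitude normalization that drives the data norm and the solution norm in opposite directions at once. The choice $\big(E_{\beta,1}(\lambda_k a^\beta)\big)^{-1/2}$ is the convenient one, since it makes the two limits $\|\mathbf{u}_0^{(k)}\|\to 0$ and $\|\mathbf{u}^{(k)}(a,\cdot)\|\to\infty$ transparent simultaneously; any normalization decaying slower than $e^{-a\lambda_k^{1/\beta}}$ but faster than a constant would serve equally well.
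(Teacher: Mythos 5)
Your deterministic argument is correct, but it takes a genuinely different route from the paper. You reduce to the linear homogeneous case $G\equiv 0$, $\mathbf{u}_1\equiv 0$ and run the classical single-mode construction: data $\mathbf{u}_0^{(k)}=\big(E_{\beta,1}(\lambda_k a^\beta)\big)^{-1/2}\phi_k$ whose norm vanishes while the solution norm at $t=a$ blows up, both facts following from the two-sided bound of Lemma \ref{lem1}. The paper deliberately does \emph{not} do this, because ill-posedness of the linear case was already established in \cite{jin} (as noted in the introduction); instead it builds a \emph{semilinear} example on $\Omega_2=(0,\pi)$ with a specially tuned Lipschitz nonlinearity $\overline G$ (whose exponentially decaying symbol makes the solution map a contraction), proves existence and uniqueness of the example's solution via the Banach fixed-point theorem, feeds in \emph{truncated white-noise} data $\langle \mathbf{u}_0^\ep,\phi_j\rangle=\ep\langle\xi,\phi_j\rangle$, $j\le \mathbf{N}(\ep)$, and chooses $\mathbf{N}(\ep)=\big[\big(\tfrac{2}{a}\ln(1/\ep)\big)^{\beta/2}\big]+1$ so that ${\bf E}\|\mathbf{U}_{\mathbf{N}(\ep)}\|^2=\ep^2\mathbf{N}(\ep)\to 0$ while ${\bf E}\|\mathbf{V}_{\mathbf{N}(\ep)}\|^2\gtrsim \ep^{-2}\to\infty$. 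What your approach buys is simplicity and transparency; what the paper's buys is that the instability is shown to persist in the presence of a nontrivial nonlinearity and within the random-data framework that is the actual subject of the paper, which also requires the extra work (fixed point argument, MISE decomposition, lower bound on ${\bf E}\,I_1$) that your route avoids. One caution on your third paragraph: the bound $\ep^2\sum_{p=1}^\infty E_{\beta,1}(\lambda_p a^\beta)^2=\infty$ is not by itself a meaningful instability statement, since untruncated white noise is not even in $L^2(\Omega_2)$ (already $\sum_p \ep^2=\infty$), so the data there are not ``small'' in any norm; to make the random version rigorous you must truncate at a finite frequency $\mathbf{N}(\ep)$ and balance it against $\ep$ exactly as the paper does.
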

	\begin{proof}
		Now, we give an example which shows that 	Problem  \eqref{problem}-\eqref{condition} has a unique solution and its solution is not stable.   For simple computation, we assume that  { $ \Omega_2 =(0,  \pi)$ }, {  $\mathcal{A}=-\Delta $  where $\Delta$ is the Laplacian operator,} and the function ${\bf u}_1=0$.   It  immediately follows that  $\la_{\bf N}={\bf N}^2 $.
		
		Let us consider the following parabolic equation
		\bq
		\left\{ \begin{gathered}
			\frac{D^{\beta}\mathbf{V}_{\bf N(\ep)}\left(t,y\right)}{dt^{\beta}}=\mathcal{A} \mathbf{V}_{\bf N(\ep)} \left(t,y\right)+ \overline G \left(t,y, \mathbf{V}_{\bf N(\ep)}\left(t,y\right)\right),\quad\left(t,y\right)\in\Omega:=\Omega_{1}\times\Omega_{2} \hfill \\
			\mathbf{V}_{\bf N(\ep)}(t,0)=\mathbf{V}_{\bf N(\ep)}(t,\pi)=0,\hfill \\
			\mathbf{V}_{\bf N(\ep)}(0,y)={\bf U}_{\bf N
				(\ep)}(y),\quad  \frac{d \mathbf{V}_{\bf N(\ep)}(0,y) }{dt}=0  \hfill\\
		\end{gathered}  \right. \label{ex1}
		\eq	
		where $\overline G$ is  given by
		\begin{equation}
			\overline G (t,y, v(t,y))= \sum_{p=1}^\infty  \frac{   \exp \big( \la_p^{\frac{1}{\beta} } (t-a)   \big)  }  {2a C_3  } \left< v(t,\cdot), \phi_p \right> \phi_p(y)
		\end{equation}
		for any $v \in L^2(\Omega_2)$,
		and $\phi_p(y)= \sqrt{\frac{2}{\pi}} \sin (py)$ and $C_3$ is defined in Lemma \eqref{qt}.
		Let  	 $  {\bf U}_{\bf N
			(\ep)} \in \mathcal   L^2(\Omega_2)$ be such that
		\begin{equation}
			{\bf U}_{\bf N
				(\ep)}(y)=  \sum_{p=1}^{\bf N(\ep) }  	\left \langle	{\bf u}_0^\ep , \phi_p \right \rangle  \phi_p(y)
		\end{equation}
		where $ {\bf u}_0^\ep$
		is defined	by
		\begin{equation}
			\left \langle  {\bf u}_0^\ep, \phi_j \right \rangle =\ep  \left \langle	 \xi , \phi_j \right \rangle ,\quad j=\overline {1,{\bf N}(\ep) }.  \label{obs33333}
		\end{equation}
		By
		the usual MISE (mean integrated squared error) decomposition   which involves a variance term and a bias term (see p.9, \cite{Mai}),  we get
		\begin{align}
			{\bf E}	\|  {\bf U}_{\bf N
				(\ep)}   \|_{L^2(\Omega)}^2  &={\bf E}	 \Big( \sum_{j=1}^{{\bf N(\delta) }}  	\left \langle {\bf u}_0^\ep , \phi_j \right \rangle^2  \Big) =  \ep^2  {\bf E} \Big( \sum_{j=1}^{{\bf N(\ep) }}  \xi_j^2 \Big)= \ep^2 {\bf N(\ep) }.
		\end{align}	
		The solution of Problem \eqref{ex1} is given by Fourier series
		\begin{align}
			&	\mathbf{V}_{\bf N(\ep)}(t,y) \nn\\
			& =\sum_{p=1}^{\infty}\left[ E_{\beta,1} \left(\la_p t^\beta\right)\left\langle {\bf U}_{\bf N
				(\ep)},\phi_{p}\right\rangle +\int_{0}^{t} (t-\eta)^{\beta-1}   E_{\beta,\beta} \left(\la_p (t-\eta)^\beta\right)  \left\langle \overline G\left(\eta,\cdot,\mathbf{V}_{\bf N(\ep)} \left(\eta,\cdot\right)\right),\phi_{p}\right\rangle d\eta \right]\phi_{p}\left(y\right).\label{exexact}
		\end{align}
		We show that Problem \eqref{exexact} has a unique solution $\mathbf{V}_{\bf N(\ep)} \in C([0,a]; L^2(\Omega_2))$. Let us consider
		\begin{align}
			&\mathcal H  v:	\nn\\
			& =\sum_{p=1}^{\infty}\left[ E_{\beta,1} \left(\la_p t^\beta\right)\left\langle  {\bf U}_{\bf N
				(\ep)} ,\phi_{p}\right\rangle +\int_{0}^{t} (t-\eta)^{\beta-1}   E_{\beta,\beta} \left(\la_p (t-\eta)^\beta\right)  \left\langle \overline G\left(\eta,\cdot,v \left(\eta,\cdot\right)\right),\phi_{p}\right\rangle d\eta \right]\phi_{p}\left(y\right).
		\end{align}
		For any $v_1, v_2 \in  C([0,a]; L^2(\Omega_2)) $, using H\"older inequality and Lemma \eqref{qt}, we have for all $t \in [0,a]$
		\begin{align}
			\|\mathcal H v_1(t)- \mathcal H v_2(t)  \|^2&=\sum_{p=1}^{\infty} \left[\int_{0}^{t} (t-\eta)^{\beta-1}   E_{\beta,\beta} \left(\la_p (t-\eta)^\beta\right)  \left\langle \overline G\left(\eta,\cdot,v_1 \left(\eta,\cdot\right)\right)-\overline G\left(\eta,\cdot,v_2 \left(\eta,\cdot\right)\right),\phi_{p}\right\rangle d\eta\right]^2 \nn\\
			&\le a \sum_{ p=1}^\infty \int_{0}^{t} \Big| (t-\eta)^{\beta-1}   E_{\beta,\beta} \left(\la_p (t-\eta)^\beta\right)  \Big|^2\Big| \left\langle \overline G\left(\eta,\cdot,v_1 \left(\xi,\cdot\right)\right)-\overline G\left(\eta,\cdot,v_2 \left(\eta,\cdot\right)\right),\phi_{p}\right\rangle^2 d\eta \nn\\
			&\le  \frac{1}{4a}  \sum_{ p=1}^\infty \int_0^t  \exp \big(2 \la_p^{\frac{1}{\beta} }(t-a)   \big)  \Big\langle	 	v_1(\eta)	 -  	v_2(\eta)	  , \phi_p  \Big\rangle^2  d\eta \nn\\
			&\le   \frac{1}{4} \|v_1-v_2\|_{ C([0,a]; L^2(\Omega_2))}^2.
		\end{align}		
		Hence, we obtain that
		\begin{align}
			\|	\mathcal H  v_1- \mathcal H  v_2  \||_{ C([0,a]; L^2(\Omega_2))} \le \frac{1}{2} \|v_1-v_2\|_{ C([0,a]; L^2(\Omega_2))}.
		\end{align}
		This implies that $\mathcal H$  is a contraction. Using
		the Banach fixed-point theorem, we conclude that the equation $\mathcal H (w)=w$ has a
		unique solution  $	\mathbf{V}_{\bf N(\ep)} \in C([0,a]; L^2(\Omega_2) )$.	
		Using the inequality $a^2+b^2 \ge \frac{1}{2} (a-b)^2,~~a, b \in \mathbb{R}$, 	we have the following estimate
		\begin{align}
			\Big\| \mathbf{V}_{\bf N(\ep)} \Big\|_{L^2(\Omega_2) }^2 & \ge  \underbrace{\frac{1}{2}	\Big\|  \sum_{ p=1}^\infty E_{\beta,1} \left(\la_p t^\beta\right)\left\langle  {\bf U}_{\bf N
					(\ep)} ,\phi_{p}\right\rangle  \phi_p(y) \Big\|_{L^2(\Omega_2) }^2}_{I_1} \nn\\
			& -  \underbrace {	\Big\| \sum_{p=1}^{\infty}\left[ \int_{0}^{t} (t-\eta)^{\beta-1}   E_{\beta,\beta} \left(\la_p (t-\eta)^\beta\right)  \left\langle \overline G\left(\eta,\cdot,\mathbf{V}_{\bf N(\ep)} \left(\eta,\cdot\right)\right),\phi_{p}\right\rangle d\eta\right]\phi_{p}\left(y\right)\Big\|_{L^2(\Omega_2) }^2  }_{I_2}. \label{es1}
		\end{align}
		First, using H\"older's inequality and Lemma \eqref{qt}, we get
		\begin{align}
			I_2 &=\sum_{p=1}^{\infty} \left[\int_{0}^{t} (t-\xi)^{\beta-1}   E_{\beta,\beta} \left(\la_p (t-\eta)^\beta\right)  \left\langle \overline G\left(\eta,\cdot,\mathbf{V}_{\bf N(\ep)} \left(\eta,\cdot\right)\right),\phi_{p}\right\rangle d\eta\right]^2 \nn\\
			&\le a \sum_{ p=1}^\infty \int_{0}^{t} \Big| (t-\eta)^{\beta-1}   E_{\beta,\beta} \left(\la_p (t-\eta)^\beta\right)  \Big|^2  \left\langle \overline G\left(\eta,\cdot,\mathbf{V}_{\bf N(\ep)} \left(\eta,\cdot\right)\right),\phi_{p}\right\rangle^2 d\eta  \nn\\
			&\le  \frac{1}{4a}  \sum_{ p=1}^\infty \int_0^t  \exp \big(2 \la_p^{\frac{1}{\beta} }(t-a)   \big)  \Big\langle	 	\mathbf{V}_{\bf N(\ep)}	   , \phi_p  \Big\rangle^2  d\eta\nn\\
			&\le   \frac{1}{4} \|\mathbf{V}_{\bf N(\ep)}\|_{ C([0,a]; L^2(\Omega_2))}^2. \label{es2}
		\end{align}
		And using Lemma \ref{lem1}, we have the lower bound for $I_1$ as follows
		\begin{align}
			{\bf  E}	I_1 &= \frac{1}{2}  \sum_{ p=1}^\infty \Big|E_{\beta,1} \left(\la_p t^\beta\right)\Big|^2 \Big| {\bf E} \left\langle  {\bf U}_{\bf N
				(\ep)} ,\phi_{p}\right\rangle^2  \nn\\
			&= \frac{1}{2 } \sum_{p=1}^{\bf N(\ep)} \ep^2 	\Big|E_{\beta,1} \left(\la_p t^\beta\right)\Big|^2 \ge \frac{\widetilde  C }{2 \beta } \ep^2  \exp\Big (2t |\la_{\bf N(\ep) }|^{\frac{1}{\beta}} \Big). \label{es3}
		\end{align}
		Combining \eqref{es1}, \eqref{es2}, \eqref{es3}, we obtain
		\begin{align}  \label{es111}
			{\bf  E}	\Big\|  \mathbf{V}_{\bf N(\ep)}  \Big\|_{ L^2(\Omega_2) }^2 + \frac{1}{4} 	{\bf  E} \|\mathbf{V}_{\bf N(\ep)}\|_{ C([0,a]; L^2(\Omega_2))}^2 \ge \frac{\widetilde  C }{2 \beta }  \ep^2  \exp\Big (2t | {\bf N(\ep) }|^{\frac{2}{\beta}} \Big).
		\end{align}
		By taking supremum of  both sides on $[0,a]$, we get
		\begin{align} \label{es112}
			{\bf  E}	 \|\mathbf{V}_{\bf N(\ep)}\|_{ C([0,a]; L^2(\Omega_2))}^2  \ge \frac{2\overline C}{5}  \sup_{0 \le t \le a}  \ep^2  \exp\Big (2t | {\bf N(\ep) }|^{\frac{2}{\beta}} \Big) 	=\frac{2\widetilde  C \ep^2}{5\beta} \exp\Big (2a | {\bf N(\ep) }|^{\frac{2}{\beta}} \Big) .
		\end{align}
		Let us choose  ${\bf N}:={\bf N}(\ep)=\Big[ \Big(\frac{2}{a} \ln (\frac{1}{\ep}) \Big)^{\frac{\beta}{2}}\Big]+1$, where $[z]$ is the greatest integer less than or equal to $z$. Then using  \eqref{es111},  we  obtain
		\begin{align} \label{ob1}
			{\bf E}	\| 	{\bf U}_{\bf N
				(\ep)}\|_{L^2(\Omega_2)}^2    = \ep^2 {\bf N(\ep) } \le  \ep^2 	\Big(\frac{2}{a} \ln (\frac{1}{\ep}) \Big)^{\frac{\beta}{2}}+\ep^2 \to 0,~\text{when}~\ep \to 0.
		\end{align}
		and by \eqref{es112}, we get
		\begin{align} \label{ob2}
			{\bf  E}	 \|\mathbf{V}_{\bf N(\ep)}\|_{ C([0,a]; L^2(\Omega_2))}^2   \ge 	 	\frac{2\widetilde  C}{5\beta \ep^2}  \to +\infty,~\text{when}~\ep  \to 0.
		\end{align}	
		From \eqref{ob1} and \eqref{ob2},  {  the expectation of  input data $\mathbf{U}_{\bf N(\ep)}$ tends to zero, while  the expectation of  output data $\mathbf{V}_{\bf N(\ep)}$ tends to infinity. Hence, we can conclude that Problem  \eqref{problem}-\eqref{condition}  is ill-posed in the sense  of Hadamard.  }
	\end{proof}
	
	\section{Regularization and error estimate}
	
	Next we prove the following lemma
	\begin{lemma} \label{lemmawhitenoise}
		Let $\overline U_{ {\bf N}(\ep) }^0,~\overline U_{ {\bf N}(\ep) }^1 \in L^2(\Omega_2) $  be such that
		\begin{equation}
			\overline U_{ {\bf N}(\ep) }^0(y) = \sum_{p=1}^{\bf N(\ep) } 	\left \langle		{\bf u}_0^\ep , \phi_p \right \rangle  \phi_p(y),~~	\overline U_{ {\bf N}(\ep) }^1(y) = \sum_{p=1}^{\bf N(\ep) } 	\left \langle		{\bf u}_1^\ep , \phi_p \right \rangle  \phi_p(y)
		\end{equation}
		Suppose that $ {\bf u}_0,~{\bf u}_1 \in H^{2\gamma}(\Omega_2) $. Then we have the following estimates
		\begin{align}
			&{\bf E}	\| \overline U_{ {\bf N}(\ep) }^0 -{\bf u}_0 \|_{L^2(\Omega_2)}^2  \le \ep^2 {\bf N}(\ep)  + \frac{1}{\la_{\bf N(\ep)}^{2\gamma}} \|{\bf u}_0\|_{H^{2\gamma}(\Omega_2)}^2\nn\\
				&{\bf E}	\| \overline U_{ {\bf N}(\ep) }^1 -{\bf u}_1 \|_{L^2(\Omega_2)}^2  \le \ep^2 {\bf N}(\ep)  + \frac{1}{\la_{\bf N(\ep)}^{2\gamma}} \|{\bf u}_1\|_{H^{2\gamma}(\Omega_2)}^2
		\end{align}
		for any $\gamma \ge 0$. Here ${\bf N}$ depends on $\ep$ and satisfies that $\lim_{\ep \to 0} {\bf N}(\ep)  =+\infty$ and  {  $\lim_{\ep \to 0} \ep^2{\bf N}(\ep) =0 . $ }
	\end{lemma}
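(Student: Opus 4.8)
The plan is to exploit the orthonormality of $\{\phi_p\}$ to split the error into a stochastic (variance) part coming from the noise and a deterministic (bias/truncation) part coming from discarding the high-frequency modes, and then to bound the two separately. I will carry out the estimate only for $\overline U_{{\bf N}(\ep)}^0$; the bound for $\overline U_{{\bf N}(\ep)}^1$ is identical after replacing ${\bf u}_0$ by ${\bf u}_1$. First I would write out the error. Since ${\bf u}_0 = \sum_{p=1}^\infty \langle {\bf u}_0, \phi_p\rangle \phi_p$ and $\langle {\bf u}_0^\ep, \phi_p\rangle = \langle {\bf u}_0, \phi_p\rangle + \ep\langle \xi, \phi_p\rangle$ by the random model \eqref{obs11111}, the difference separates over the two disjoint index ranges:
\begin{align*}
\overline U_{{\bf N}(\ep)}^0 - {\bf u}_0 = \ep \sum_{p=1}^{{\bf N}(\ep)} \langle \xi, \phi_p\rangle \phi_p \;-\; \sum_{p={\bf N}(\ep)+1}^\infty \langle {\bf u}_0, \phi_p\rangle \phi_p.
\end{align*}
Because these two sums run over disjoint sets of indices and $\{\phi_p\}$ is orthonormal, there are no cross terms in the $L^2$ norm, so Parseval gives
\begin{align*}
\big\| \overline U_{{\bf N}(\ep)}^0 - {\bf u}_0 \big\|_{L^2(\Omega_2)}^2 = \ep^2 \sum_{p=1}^{{\bf N}(\ep)} \langle \xi, \phi_p\rangle^2 \;+\; \sum_{p={\bf N}(\ep)+1}^\infty \langle {\bf u}_0, \phi_p\rangle^2.
\end{align*}

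Next I would take expectation. The second sum is deterministic and is untouched; only the first is affected. Here the defining property of Gaussian white noise enters: the coordinates $\langle \xi, \phi_p\rangle$ are standard normal, so ${\bf E}\,\langle \xi, \phi_p\rangle^2 = 1$, and the variance term collapses to exactly $\ep^2 {\bf N}(\ep)$.

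Finally I would control the bias (tail) term using the Sobolev norm $\|{\bf u}_0\|_{H^{2\gamma}(\Omega_2)}^2 = \sum_{p=1}^\infty \la_p^{2\gamma}\langle {\bf u}_0, \phi_p\rangle^2$. Since the eigenvalues are nondecreasing by \eqref{eigen2}, every $p \ge {\bf N}(\ep)+1$ satisfies $\la_p \ge \la_{{\bf N}(\ep)}$, whence $\la_p^{-2\gamma} \le \la_{{\bf N}(\ep)}^{-2\gamma}$ for $\gamma \ge 0$. Inserting the factor $\la_p^{2\gamma}/\la_p^{2\gamma}$ then yields
\begin{align*}
\sum_{p={\bf N}(\ep)+1}^\infty \langle {\bf u}_0, \phi_p\rangle^2 \le \frac{1}{\la_{{\bf N}(\ep)}^{2\gamma}} \sum_{p={\bf N}(\ep)+1}^\infty \la_p^{2\gamma}\langle {\bf u}_0, \phi_p\rangle^2 \le \frac{1}{\la_{{\bf N}(\ep)}^{2\gamma}} \|{\bf u}_0\|_{H^{2\gamma}(\Omega_2)}^2,
\end{align*}
which, combined with the variance bound, is precisely the claimed inequality.

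The computation is essentially routine once the orthogonal decomposition is in place, so I do not expect a genuine obstacle; the one point needing care is the stochastic moment identity ${\bf E}\,\langle \xi, \phi_p\rangle^2 = 1$, which is where the white-noise modelling assumption is used and which forces the variance term to grow linearly in ${\bf N}(\ep)$. The two concluding conditions $\lim_{\ep\to 0}{\bf N}(\ep) = +\infty$ and $\lim_{\ep\to 0}\ep^2{\bf N}(\ep) = 0$ are exactly the requirements that send both error contributions to zero as $\ep \to 0$ (the bias vanishing because $\la_{{\bf N}(\ep)} \to \infty$ by \eqref{eigen2}), thereby fixing the admissible range of truncation levels used in the regularization of the next section.
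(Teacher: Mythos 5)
Your proof is correct and follows essentially the same route as the paper: the paper's appeal to ``the usual MISE decomposition'' is exactly the orthogonal variance-plus-bias split you derive explicitly via Parseval, followed by the same two ingredients, namely ${\bf E}\left\langle \xi,\phi_p\right\rangle^2=1$ for the noise term and the insertion of $\la_p^{2\gamma}/\la_p^{2\gamma}$ with eigenvalue monotonicity for the tail. Your write-up is in fact somewhat more detailed than the paper's, which states the decomposition without justification.
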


	\begin{proof}
		For the
		following proof, we consider the genuine model (\ref{obs222222}). By
		the usual MISE decomposition which involves a variance term and a
		bias term, we get
		\begin{align}
			{\bf E}	\| \overline U_{ {\bf N}(\ep) }^0 -{\bf u}_0 \|_{L^2(\Omega_2)}^2  &={\bf E}	 \Big( \sum_{p=1}^{{\bf N(\ep) }}  	\left \langle 	\mathbf{u}_{0}^{\ep}-\mathbf{u}_{0} , \phi_p \right \rangle^2  \Big)+\sum_{p \geq {\bf N(\ep) }+1} 	\left \langle	{\bf u}_0 , \phi_p \right \rangle^2 \nn\\
			&=  \ep^2  {\bf E} \Big( \sum_{p=1}^{{\bf N(\ep) }}  \xi_j^2 \Big)+ \sum_{p \geq {\bf N(\ep) }+1} \la_p^{-2\gamma}  \la_p^{2\gamma}	\left \langle	{\bf u}_0 , \phi_p \right \rangle^2 .
		\end{align}
		Since $\xi_{j} \stackrel {iid}{\sim} N(0,1)$, it follows that  $ {\bf E} \xi_j^2=1$, so  the proof is completed.
	\end{proof}
		In this paper, we apply the truncation method to establish a regularized solution as follows
		\begin{align}
		\mathbf{u}_{\bf N(\ep)}^\ep\left(t,y\right) & =\sum_{p=1}^{\infty}\mathcal {R} (\la_p, {\bf N(\ep)}) \left[ E_{\beta,1} \left(\la_p t^\beta\right)\left\langle \overline U_{ {\bf N}(\ep) }^0,\phi_{p}\right\rangle +t E_{\beta,2} \left(\la_p t^\beta\right)\left\langle \overline U_{ {\bf N}(\ep) }^1,\phi_{p}\right\rangle \right.\nonumber \\
		& \left.+\int_{0}^{t} (t-\eta)^{\beta-1}   E_{\beta,\beta} \left(\la_p (t-\eta)^\beta\right)  \left\langle G\left(\eta,\cdot,\mathbf{u}_{\bf N(\ep)}^\ep\left(\eta,\cdot\right)\right),\phi_{p}\right\rangle d\eta \right]\phi_{p}\left(y\right),\quad\left(t,y\right)\in\Omega.\label{eq:regu}
		\end{align}
		Here $\mathcal {R} (\la_p, {\bf N(\ep)})=1 $ if $\la_p \le B_{\bf N(\ep)}$ and is zero if $\la_p > B_{\bf N(\ep)}$ and $B_{\bf N(\ep)}$ is called a parameter of regularization which will be chosen later.

	Our  main result is as follows
	\begin{theorem} \label{theorem3.1}
		The integral equation \eqref{eq:regu} has a unique solution ${\bf u}^\ep_{\bf N (\ep) } \in C([0,a];L^2(\Omega_2))$. Suppose that ${\bf u}_0,{\bf u}_1 \in H^\gamma(\Omega_2) $ that  satisfy
		\[
		\|{\bf u}_0\|_{H^{2\gamma}(\Omega_2)}+\|{\bf u}_1\|_{H^{2\gamma}(\Omega_2)} \le \mathcal M_0.
		\]
		Assume that problem \eqref{problem}-\eqref{condition}  has a unique mild solution ${\bf u}$ which satisfies that
		\begin{equation}
			\sum_{ p=1 }^\infty  \la_p ^{ \mu } \exp \Big(2(a-t)\la_p^{\frac{1}{\beta}} \Big)  \left \langle	{\bf u}(t,\cdot) , \phi_p \right \rangle^2 \le \mathcal M,~~t \in [0,a],
		\end{equation}
		for some   positive constants $\mu, \mathcal M$.
		Assume that   $B_{\bf N(\ep) }$ satisfy
		\begin{equation} \label{cond}
			\lim_{\ep \to 0}  B_{\bf N(\ep)}= +\infty,~	\lim_{\ep \to 0}  \exp \big( 2 |B_{\bf N(\ep)}|^{\frac{1}{\beta} }a   \big)\ep^2 {\bf N}(\ep) = \lim_{\ep \to 0} \frac{	 \exp \big( 2 |B_{\bf N(\ep)}|^{\frac{1}{\beta} }a   \big)}{\la_{\bf N(\ep)}^{2\gamma}}=0.
		\end{equation}
		Then the following estimate holds
		\begin{align}
			&	{\bf E}	\| 		\mathbf{u}_{\bf N(\ep)}^\ep\left(t,.\right)-	\mathbf{u}\left(t,.\right) \|_{L^2(\Omega_2)}^2\nn\\
			&\le 2 C_1 \exp \big( 2|B_{\bf N(\ep)}|^{\frac{1}{\beta} } t   \big) \Bigg( 2\ep^2 {\bf N}(\ep)  + \frac{\mathcal M_0}{\la_{\bf N(\ep)}^{2\gamma}} \Bigg)+2D_1 \exp \Big(-2(a-t) |B_{\bf N(\ep)}|^{\frac{1}{\beta} }  \Big) \la_{\bf N(\ep)}^{-\mu} \mathcal M^2.\nn\\
		\end{align}
	\end{theorem}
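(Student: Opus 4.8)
The plan is to deal first with existence and uniqueness exactly as in the proof of Theorem \ref{theorem2.1}. I would define the solution operator $\mathcal P$ on $C([0,a];L^2(\Omega_2))$ by the right-hand side of \eqref{eq:regu} and show that a suitable iterate of $\mathcal P$ is a contraction. The cut-off $\mathcal R(\la_p,{\bf N}(\ep))$ confines the series to $\la_p \le B_{\bf N(\ep)}$, so inequality \eqref{bdt3} of Lemma \ref{qt} gives $(t-\eta)^{\beta-1}E_{\beta,\beta}(\la_p(t-\eta)^\beta)\le C_3\exp(|B_{\bf N(\ep)}|^{1/\beta}(t-\eta))$; combined with the standing global Lipschitz hypothesis on $G$ (with constant $K_G$) and Hölder's inequality exactly as in \eqref{es2}, this controls $\|\mathcal P v_1-\mathcal P v_2\|^2$ by $\int_0^t\|v_1-v_2\|^2$. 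A Bielecki-type weighted norm $\|v\|_\ast=\sup_t e^{-Lt}\|v(t)\|$ with $L$ large (or iteration on short subintervals) then makes $\mathcal P$ a contraction on all of $[0,a]$, and Banach's fixed-point theorem supplies the unique ${\bf u}_{\bf N(\ep)}^\ep\in C([0,a];L^2(\Omega_2))$.

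For the error estimate I would introduce the auxiliary function ${\bf w}_{\bf N(\ep)}$ solving the same truncated equation \eqref{eq:regu} but fed with the exact data ${\bf u}_0,{\bf u}_1$ in place of $\overline U_{{\bf N}(\ep)}^0,\overline U_{{\bf N}(\ep)}^1$, and split $\|{\bf u}_{\bf N(\ep)}^\ep-{\bf u}\|^2\le 2\|{\bf u}_{\bf N(\ep)}^\ep-{\bf w}_{\bf N(\ep)}\|^2+2\|{\bf w}_{\bf N(\ep)}-{\bf u}\|^2$. The first term is the propagation of the data noise through the stable truncated problem, and the second is the pure regularization/truncation error. Taking expectation and using $2\|\cdot\|^2+2\|\cdot\|^2$ reduces everything to estimating these two pieces separately.

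For the noise term, the linear part contributes, after expectation and bounding $E_{\beta,1},tE_{\beta,2}$ by $C_1\exp(|B_{\bf N(\ep)}|^{1/\beta}t)$ via \eqref{bdt1}--\eqref{bdt2}, a factor $C_1^2\exp(2|B_{\bf N(\ep)}|^{1/\beta}t)$ times the data error ${\bf E}\|\overline U^0-{\bf u}_0\|^2+{\bf E}\|\overline U^1-{\bf u}_1\|^2$, which Lemma \ref{lemmawhitenoise} bounds by $2\ep^2{\bf N}(\ep)+\mathcal M_0\la_{\bf N(\ep)}^{-2\gamma}$. The nonlinear part gives $\le aC_3^2K_G^2\int_0^t\exp(2|B_{\bf N(\ep)}|^{1/\beta}(t-\eta))\,{\bf E}\|{\bf u}_{\bf N(\ep)}^\ep-{\bf w}_{\bf N(\ep)}\|^2d\eta$; setting $\psi(t)=\exp(-2|B_{\bf N(\ep)}|^{1/\beta}t)\,{\bf E}\|{\bf u}_{\bf N(\ep)}^\ep-{\bf w}_{\bf N(\ep)}\|^2$ turns the singular kernel into a constant one, and Gronwall's inequality yields the stated $2C_1\exp(2|B_{\bf N(\ep)}|^{1/\beta}t)(2\ep^2{\bf N}(\ep)+\mathcal M_0\la_{\bf N(\ep)}^{-2\gamma})$ with the Gronwall factor absorbed into the constant. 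For the truncation term I would observe that, since $\langle{\bf u}(t,\cdot),\phi_p\rangle=E_{\beta,1}\langle{\bf u}_0\rangle+tE_{\beta,2}\langle{\bf u}_1\rangle+\int_0^t(\cdots)\langle G(\eta,{\bf u})\rangle d\eta$, one has ${\bf w}_{\bf N(\ep)}-{\bf u}=\sum_{\la_p\le B}\int_0^t(\cdots)\langle G(\eta,{\bf w})-G(\eta,{\bf u}),\phi_p\rangle d\eta\,\phi_p-(I-P_{B})\,{\bf u}(t)$, where $P_B$ projects onto $\{\la_p\le B_{\bf N(\ep)}\}$. The dropped tail is estimated by inserting the weight $\la_p^{\mu}\exp(2(a-t)\la_p^{1/\beta})$ and using the source condition $\sum_p\la_p^{\mu}\exp(2(a-t)\la_p^{1/\beta})\langle{\bf u},\phi_p\rangle^2\le\mathcal M$: for $\la_p>B_{\bf N(\ep)}$ the reciprocal weight is at most $B_{\bf N(\ep)}^{-\mu}\exp(-2(a-t)|B_{\bf N(\ep)}|^{1/\beta})\le\la_{\bf N(\ep)}^{-\mu}\exp(-2(a-t)|B_{\bf N(\ep)}|^{1/\beta})$, giving exactly the factor in the statement. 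The nonlinear coupling is handled by the same weighted Gronwall argument.

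The delicate point is precisely this nonlinear term. The Mittag-Leffler bound forces the kernel $\exp(2|B_{\bf N(\ep)}|^{1/\beta}(t-\eta))$, which grows with the regularization parameter, so a naive Gronwall estimate would multiply the final bound by an unbounded constant of order $\exp(\mathrm{const}\cdot|B_{\bf N(\ep)}|^{1/\beta})$ and destroy convergence. The remedy is to rescale by $\exp(\mp2|B_{\bf N(\ep)}|^{1/\beta}t)$ before invoking Gronwall, so that the exponential in the kernel cancels and the Gronwall constant depends only on $a$, $C_3$ and $K_G$; this is what keeps the growing factor $\exp(2|B_{\bf N(\ep)}|^{1/\beta}t)$ on the noise side and the decaying factor $\exp(-2(a-t)|B_{\bf N(\ep)}|^{1/\beta})$ on the truncation side, as the statement demands. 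Reassembling the two bounds yields the claimed inequality, and the hypotheses \eqref{cond} on $B_{\bf N(\ep)}$ force both contributions to $0$ as $\ep\to0$.
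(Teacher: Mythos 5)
Your proposal is correct and takes essentially the same route as the paper: the same auxiliary solution of the truncated equation fed with exact data (the paper's $\mathbf{v}_{\bf N(\ep)}^\ep$, your ${\bf w}_{\bf N(\ep)}$), the same two-term splitting with the factor-of-two inequality, Lemma \ref{lemmawhitenoise} for the data error, the source condition for the spectral tail, and crucially the same rescaling by $\exp\big(\mp 2|B_{\bf N(\ep)}|^{\frac{1}{\beta}}t\big)$ before applying Gronwall so that the Gronwall constant stays independent of the regularization parameter. The only (cosmetic) difference is in the existence step, where you use a Bielecki weighted norm while the paper proves by induction that some iterate $\mathcal{F}^{m_0}$ of the solution map is a contraction; both are standard and interchangeable here.
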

	\begin{remark}
	 	From the theorem above, it is easy to see that
$	{\bf E}	\| 		\mathbf{u}_{\bf N(\ep)}^\ep\left(t,.\right)-	\mathbf{u}\left(t,.\right) \|_{L^2(\Omega_2)}^2 $ is of order {
\begin{equation}
\max\bigg[\la_{\bf N(\ep)}^{-\mu}\exp \Big(-2(a-t) |B_{\bf N(\ep)}|^{\frac{1}{\beta} }  \Big) , \ep^2 {\bf N}(\ep)  e^{2a  |B_{\bf N(\ep)}|^{\frac{1}{\beta} }  } , \frac{ e^{2a  |B_{\bf N(\ep)}|^{\frac{1}{\beta} }  } }{\la_{\bf N(\ep)}^{2\gamma}}  \Big].
\end{equation}}
	We give one example for the choice of ${\bf N} (\ep)$
	which satisfies the condition \eqref{cond}.  It is well-known that $\la_{\bf N(\ep)} \sim (\bf N(\ep))^{\frac{2}{d}} $, we can choose ${\bf N} (\ep)$ such that
		$
		{\bf N(\ep)}= [\ep^{\frac{-2b}{2m+1}}]
		$ for some $b>0$ and
	\[
	e^{k a |B_{\bf N(\ep)}|^{\frac{1}{\beta} }  }=({\bf N(\ep)})^m,~~0<m < \frac{2\gamma}{d}.
	\]
	Then, we get
	\[
	B_{\bf N(\ep)}= \left(\frac{m}{ka} \log ( {\bf N(\ep)} ) \right)^\beta
		\]
	Then the error $	{\bf E}	\| 		\mathbf{u}_{\bf N(\ep)}^\ep\left(t,.\right)-	\mathbf{u}\left(t,.\right) \|_{L^2(\Omega_2)}^2 $ is of order
	\begin{equation}
\ep^{\frac{ 4bm(a-x)}{ (2m+1)a } }  \max \Big( \ep^{2-2b},  \ep^{\frac{2b(4\gamma-2md)}{(2m+1)d } }, \ep^{   \frac{4b \mu}{ (2m+1)d }  }    \Big).
	\end{equation}
	
	\end{remark}
	
	\begin{proof}[{\bf Proof of Theorem \ref{theorem3.1}}]
		We divide the proof into some smaller parts.\\
		\noindent 	{\bf Part  1.} The existence and uniqueness  of the solution of the nonlinear integral equation \eqref{eq:regu} .
		
		For $v \in C( [0,a]; L^2(\Omega_2))) $, we put
		\begin{align}
			\mathcal{F}(v)(t,y) & =\sum_{p=1}^{\infty}\mathcal {R} (\la_p, {\bf N}(\ep)) \bigg[ E_{\beta,1} \left(\la_p t^\beta\right)\left\langle \overline U_{ {\bf N}(\ep) }^0,\phi_{p}\right\rangle +t E_{\beta,2} \left(\la_p t^\beta\right)\left\langle \overline U_{ {\bf N}(\ep) }^1,\phi_{p}\right\rangle \nonumber \\
			& +\int_{0}^{t} (t-\eta)^{\beta-1}   E_{\beta,\beta} \left(\la_p (t-\eta)^\beta\right)  \left\langle G\left(\eta,\cdot,v\left(\eta,\cdot\right)\right),\phi_{p}\right\rangle d\eta \bigg]\phi_{p}\left(y\right).
		\end{align}
		We will  prove by induction  that if  $v_1, v_2 \in C( [0,a]; L^2(\Omega_2))) $ then
		\begin{align} \label{induction}
			&\Big\|  \mathcal{F}^m(w_1)(t,.)-     \mathcal{F}^m(w_2)(t,.) \Big\|_{ L^2(\Omega_2) } \nn\\
			&\le   \Bigg(  \frac{K^2 a^2A_1^2\la_1^{\frac{2-2\beta}{\beta} }  \exp \Big( 2|B_{\bf N(\ep)}|^{\frac{1}{\beta} } a  \Big) }{\beta^2}\Bigg)^m \frac{t^m}{m!} \|w_1-w_2\|_{C( [0,a]; L^2(\Omega_2))}.
		\end{align}
		For $m=1$, we have by using Lemma \ref{qt} and the fact that $G$ is Lipchitz
		\begin{align}
			&\|  \mathcal{F}(v_1)-  \mathcal{F}(v_2) \|_{L^2(\Omega_2)}^2  \nn\\
			&	\le t  \sum_{p=1}^{\infty} |\mathcal {R} (\la_p, {\bf N}(\ep))|^2  \int_{0}^{t} (t-\eta)^{2\beta-2}  | E_{\beta,\beta} \left(\la_p (t-\eta)^\beta\right)|^2  \left\langle G\left(\eta,\cdot,v_1\left(\eta,\cdot\right)\right)-G\left(\eta,\cdot,v_2\left(\eta,\cdot\right)\right),\phi_{p}\right\rangle^2 d\eta \nn\\
			&\le 	\frac{a^2A_1^2}{\beta^2} \la_1^{\frac{2-2\beta}{\beta} } \int_0^t \exp \Big( 2 |B_{\bf N(\ep)}|^{\frac{1}{\beta} } (t-\eta)   \Big)  \Big\| G\left(\eta,\cdot,v_1\left(\eta,\cdot\right)\right)-G\left(\eta,\cdot,v_2\left(\eta,\cdot\right)\right)\Big\|^2_{L^2(\Omega_2)}d\eta\nn\\
			& \le \frac{K^2 a^2A_1^2\la_1^{\frac{2-2\beta}{\beta} } t}{\beta^2}   \exp \Big( 2 |B_{\bf N(\ep)}|^{\frac{1}{\beta} } a  \Big)  \Big\|v_1-v_2\Big\|^2_{C([0,a];L^2(\Omega_2))}.
		\end{align}
		Assume that \eqref{induction} holds for $m=p$. We show that  \eqref{induction} holds for $m=p+1$. In fact, we have
		\begin{align}
			&\|  \mathcal{F}^{p+1}(v_1)-  \mathcal{F}^{p+1}(v_2) \|_{L^2(\Omega_2)}^2  \nn\\
			&	\le t  \sum_{p=1}^{\infty} |\mathcal {R} (\la_p, {\bf N}(\ep) )|^2  \int_{0}^{t} (t-\eta)^{2\beta-2}  | E_{\beta,\beta} \left(\la_p (t-\eta)^\beta\right)|^2  \left\langle G\left(\eta,\cdot, \mathcal{F}^{p}(v_1)\left(\eta,\cdot\right)\right)-G\left(\eta,\cdot, \mathcal{F}^{p}(v_2)\left(\eta,\cdot\right)\right),\phi_{p}\right\rangle^2 d\eta \nn\\
			&\le 	\frac{a^2A_1^2\la_1^{\frac{2-2\beta}{\beta} }}{\beta^2}  \int_0^t \exp \Big( 2 |B_{\bf N(\ep)}|^{\frac{1}{\beta} } (t-\eta)   \Big)  \Big\| G\left(\eta,\cdot,\mathcal{F}^{p}(v_1)\left(\eta,\cdot\right)\right)-G\left(\eta,\cdot,\mathcal{F}^{p}(v_2)\left(\eta,\cdot\right)\right)\Big\|^2_{L^2(\Omega_1)}
d\eta \nn\\
			& \le \frac{K^2 a^2A_1^2\la_1^{\frac{2-2\beta}{\beta} } t }{\beta^2}   \exp \Big( 2 |B_{\bf N(\ep)}|^{\frac{1}{\beta} } a  \Big)  \Big\|\mathcal{F}^{p}(v_1)-\mathcal{F}^{p}(v_2)\Big\|^2_{C([0,a];L^2(\Omega_2))}\nn\\
			&\le  \Bigg(  \frac{K^2 a^2A_1^2\la_1^{\frac{2-2\beta}{\beta} }  \exp \Big( 2\la_{\bf N(\ep)}^{\frac{1}{\beta} } a  \Big) }{\beta^2}\Bigg)^{p+1} \frac{x^{p+1}}{(p+1)!} \|v_1-v_2\|_{C( [0,a]; L^2(\Omega_2))}.
		\end{align}
		Therefore, by  induction, we have \eqref{induction} for all $w, v \in C([0,a];L^2(\Omega_2))$. Since
		$$
		\lim_{m \to +\infty} \Bigg(  \frac{K^2 a^2A_1^2\la_1^{\frac{2-2\beta}{\beta} }  \exp \Big( 2|B_{\bf N(\ep)}|^{\frac{1}{\beta} } a  \Big) }{\beta^2}\Bigg)^m \frac{a^m}{m!} =0
		$$
		there exists a positive integer  $m_0$ such that $\mathcal{F}^{m_0}$ is a contraction. It follows that the
		equation $\mathcal{F}^{m_0}w=w$ has a unique solution $u_{N(\ep)}^\ep \in C([0,a];L^2(\Omega_2))$. We claim that
		$
		\mathcal{F} ( u_{N(\ep)}^\ep ) = u_{N(\ep)}^\ep .
		$
		In fact, since $\mathcal{F}^{m_0} ( u_{N(\ep)}^\ep) = u_{N(\ep)}^\ep $, we know that $ \mathcal {F} \left( \mathcal{F}^{m_0} ( u_{N(\ep)}^\ep )  \right)=\mathcal{F} ( u_{N(\ep)}^\ep ) $. This is equavilent to $   \mathcal {F}^{m_0} \left( \mathcal{F} ( u_{N(\ep)}^\ep )  \right)=\mathcal{F} ( u_{N(\ep)}^\ep )  $. Hence, $\mathcal{F} ( u_{N(\ep)}^\ep ) $ is a fixed point of $\mathcal {F}^{m_0}$. Moreover, as noted above, $u_{N(\ep)}^\ep$ is a fixed point of $ \mathcal {F}^{m_0} $.
		
		{\bf Part 2}. Estimate the expectation of the error between the exact solution ${\bf u}$ and the regularized solution $	\mathbf{u}_{\bf N(\ep)}^\ep $.\\
		Let us consider the following integral equation
		\begin{align}
			\mathbf{v}_{\bf N(\ep)}^\ep\left(t,y\right) & =\sum_{p=1}^{\infty}\mathcal {R} (\la_p, {\bf N}) \left[ E_{\beta,1} \left(\la_p t^\beta\right)\left\langle \mathbf{u}_{0},\phi_{p}\right\rangle +t E_{\beta,2} \left(\la_p t^\beta\right)\left\langle \mathbf{u}_{1},\phi_{p}\right\rangle \right.\nonumber \\
			& \left.+\int_{0}^{t} (t-\eta)^{\beta-1}   E_{\beta,\beta} \left(\la_p (x-\eta)^\beta\right)  \left\langle G\left(\eta,\cdot,\mathbf{v}_{\bf N(\ep)}^\ep\left(\eta,\cdot\right)\right),\phi_{p}\right\rangle d\eta \right]\phi_{p}\left(y\right),\quad\left(t,y\right)\in\Omega,\label{vep}
		\end{align}
	Combining \eqref{eq:regu} and \eqref{vep} and	taking the expectation of both sides of the norm in $L^2$, we get
		\begin{align}
			&{\bf E}	\| 		\mathbf{u}_{\bf N(\ep)}^\ep\left(t,.\right)-	\mathbf{v}_{\bf N(\ep)}^\ep\left(t,.\right) \|_{L^2(\Omega_2)}^2 \nn\\
			&\le 3  	{\bf E}\Bigg(  \sum_{ \la_p \le B_{\bf N(\ep)} } |E_{\beta,1} \left(\la_p t^\beta\right)|^2  \left \langle	\overline U_{ {\bf N}(\ep) }^0- {\bf u}_0 , \phi_p \right \rangle^2\Bigg)\nn\\
			&+3  	{\bf E}\Bigg(  \sum_{ \la_p \le B_{\bf N(\ep)} } |tE_{\beta,2} \left(\la_p t^\beta\right)|^2  \left \langle	\overline U_{ {\bf N}(\ep) }^1- {\bf u}_1 , \phi_p \right \rangle^2\Bigg)\nn\\
			&+  3  	{\bf E}\Bigg(  \sum_{ \la_p \le B_{\bf N(\ep)} }  \left[ \int_{0}^{t} (t-\eta)^{\beta-1}   E_{\beta,\beta} \left(\la_p (t-\eta)^\beta\right)  \left\langle G\left(\eta,\cdot,\mathbf{u}_{\bf N(\ep)}^\ep\left(\eta,\cdot\right)\right)- G\left(\eta,\cdot,\mathbf{v}_{\bf N(\ep)}^\ep\left(\eta,\cdot\right)\right),\phi_{p}\right\rangle d\eta \right]^2\Bigg).
		\end{align}
		Where above  we have used the inequality $(a+b+c)^2 \le 3a^2+ 3b^2+3c^2$ for real numbers $a,b,c.$
		Using Lema \ref{lemmawhitenoise} and the H\"older inequality, we deduce that
		\begin{align}
			&{\bf E}	\| 		\mathbf{u}_{\bf N(\ep)}^\ep\left(t,.\right)-	\mathbf{v}_{\bf N(\ep)}^\ep\left(t,.\right) \|_{L^2(\Omega_2)}^2\nn\\
			  &\le \frac{ 3 A_1^2}{\beta^2} \exp \big( 2  |B_{\bf N(\ep)}|^{\frac{1}{\beta}}  t   \big)	{\bf E}		\| \overline U_{ {\bf N}(\ep) }^0 -{\bf u}_0 \|_{L^2(\Omega_2)}^2 \nn\\
			&+ \frac{3A_1^2}{\beta^2} \la_1^{-\frac{2}{\beta}} \exp \big( 2  |B_{\bf N(\ep)}|^{\frac{1}{\beta}}  t   \big)	{\bf E}		\| \overline U_{ {\bf N}(\ep) }^1 -{\bf u}_1 \|_{L^2(\Omega_2)}^2 \nn\\
			&+  \frac{3k^2 a A_1^2}{\beta^2} \la_1^{\frac{2-2\beta}{\beta}}  \int_0^t \exp \Big( 2  |B_{\bf N(\ep)}|^{\frac{1}{\beta}}  (t-\eta )   \Big)		{\bf E}	\| 		\mathbf{u}_{\bf N(\ep)}^\ep\left(\eta,.\right)-	\mathbf{v}_{\bf N(\ep)}^\ep\left(\eta,.\right) \|_{L^2(\Omega_2)}^2  d\eta.
		\end{align}
		Multiplying both sides with  $\exp \big(- 2  |B_{\bf N(\ep)}|^{\frac{1}{\beta}} t   \big)$, we obtain
		\begin{align}
			&\exp \big(- 2  |B_{\bf N(\ep)}|^{\frac{1}{\beta}}  t   \big)	{\bf E}	\| 		\mathbf{u}_{\bf N(\ep)}^\ep\left(t,.\right)-	\mathbf{v}_{\bf N(\ep)}^\ep\left(t,.\right) \|_{L^2(\Omega_2)}^2 \nn\\
			&\le  \frac{ 3 A_1^2}{\beta^2}	{\bf E}		\| \overline U_{ {\bf N}(\ep) }^0 -{\bf u}_0 \|_{L^2(\Omega_y)}^2 +\frac{3A_1^2}{\beta^2} \la_1^{-\frac{2}{\beta}}	{\bf E}		\| \overline U_{ {\bf N}(\ep) }^1 -{\bf u}_1 \|_{L^2(\Omega_2)}^2 \nn\\
			&	+  \frac{3k^2 a A_1^2}{\beta^2} \la_1^{\frac{2-2\beta}{\beta}}  \int_0^t \exp \Big( -2  |B_{\bf N(\ep)}|^{\frac{1}{\beta}} \eta   \Big)		{\bf E}	\| 		\mathbf{u}_{\bf N(\ep)}^\ep\left(\eta,.\right)-	\mathbf{v}_{\bf N(\ep)}^\ep\left(\eta,.\right) \|_{L^2(\Omega_2)}^2  d\eta.
		\end{align}	
		Applying Gronwall's inequality, we get
		\begin{align}
			&\exp \big(- 2  |B_{\bf N(\ep)}|^{\frac{1}{\beta}}  t  \big)	{\bf E}	\| 		\mathbf{u}_{\bf N(\ep)}^\ep\left(t,.\right)-	\mathbf{v}_{\bf N(\ep)}^\ep\left(t,.\right) \|_{L^2(\Omega_2)}^2  \nn\\
			&\le \frac{3A_1^2}{\beta^2} \max \big(1, \la_1^{\frac{2-2\beta}{\beta}} \big)\exp \Big( \frac{3k^2 a A_1^2}{\beta^2} \la_1^{\frac{2-2\beta}{\beta}} \Big)  \Bigg(  	{\bf E}		\| \overline U_{ {\bf N}(\ep) }^0 -{\bf u}_0 \|_{L^2(\Omega_2)}^2+	{\bf E}		\| \overline U_{ {\bf N}(\ep) }^1 -{\bf u}_1 \|_{L^2(\Omega_2)}^2 \Bigg)\nn\\
			&\le \underbrace{\frac{3A_1^2}{\beta^2} \max \big(1, \la_1^{\frac{2-2\beta}{\beta}} \big)\exp \Big( \frac{3k^2 a A_1^2}{\beta^2} \la_1^{\frac{2-2\beta}{\beta}} \Big)}_{C_1:=C_1(\beta, A_1,a,k, \la_1)} \Bigg( 2\ep^2 {\bf N}(\ep)  + \frac{\|{\bf u}_0\|^2_{H^{2\gamma}(\Omega_2)}+\|{\bf u}_1\|^2_{H^{2\gamma}(\Omega_y)}}{\la_{\bf N(\ep)}^{2\gamma}} \Bigg)\nn\\
			&\le C_1 \Bigg( 2\ep^2 {\bf N}(\ep)  + \frac{\mathcal M_0}{\la_{\bf N(\ep)}^{2\gamma}} \Bigg) . \label{saiso1}
		\end{align}
		Now, we continue to estimate $\| {\bf u}(t,.)- \mathbf{v}_{\bf N(\ep)}^\ep\left(t,.\right)\|_{L^2(\Omega_2)}$. Indeed, using H\"older inequality, globally Lipschitzp roperty of $G$, and equations \eqref{mildsolution} and \eqref{exexact}  we get
		\begin{eqnarray}
			\begin{aligned}
				&\| {\bf u}(t,.)- \mathbf{v}_{\bf N(\ep)}^\ep\left(t,.\right)\|_{L^2(\Omega_2)}^2\nn\\
				& \le 2  \sum_{ \la_p \le B_{\bf N(\ep)} }  \left[ \int_{0}^{t} (t-\eta)^{\beta-1}   E_{\beta,\beta} \left(\la_p (t-\eta)^\beta\right)  \left\langle G\left(\eta,\cdot,\mathbf{u} \left(\eta,\cdot\right)\right)- G\left(\eta,\cdot,\mathbf{v}_{\bf N(\ep)}^\ep\left(\eta,\cdot\right)\right),\phi_{p}\right\rangle d\eta \right]^2 \nn\\
				&+2 \sum_{ \la_p > B_{\bf N(\ep)} } \left \langle	{\bf u}(t,y) , \phi_p \right \rangle^2 \nn\\
				&\le 2 \sum_{ \la_p > B_{\bf N(\ep)} }  \la_p ^{- \mu } \exp \Big(-2(a-t)\la_p^{\frac{1}{\beta}} \Big) \la_p ^{ \mu } \Big(2(a-t)\la_p^{\frac{1}{\beta}} \Big)  \left \langle	{\bf u}(t,y) , \phi_p \right \rangle^2  \nn\\
				&+\frac{2k^2 a A_1^2}{\beta^2} \la_1^{\frac{2-2\beta}{\beta}}  \int_0^t \exp \Big( 2 B_{\bf N(\ep)}^{\frac{1}{\al} }(t-\eta)   \Big)		\big\| 		\mathbf{u}\left(\eta,.\right)-	\mathbf{v}_{\bf N(\ep)}^\ep\left(\eta,.\right) \big\|_{L^2(\Omega_2)}^2  d\eta \\
				&\le |B_{\bf N(\ep)}|^{-\mu}\exp \Big(-2(a-t) |B_{\bf N(\ep)}|^{\frac{1}{\beta}} \Big)  \mathcal M^2\nn\\
				&+ \frac{2k^2 a A_1^2}{\beta^2} \la_1^{\frac{2-2\beta}{\beta}}  \int_0^t \exp \Big( 2 |B_{\bf N(\ep)}|^{\frac{1}{\beta} }(t-\eta)   \Big)		\big\| 		\mathbf{u}\left(\eta,.\right)-	\mathbf{v}_{\bf N(\ep)}^\ep\left(\eta,.\right) \big\|_{L^2(\Omega_2)}^2  d\eta.
			\end{aligned}
		\end{eqnarray}
		Multiplying both sides with  $\exp \Big(2(a-t)|B_{\bf N(\ep)}|^{\frac{1}{\beta}} \Big)$, we obtain
		\begin{align}
			&\exp \Big(2(a-t) |B_{\bf N(\ep)}|^{\frac{1}{\beta}} \Big)	\| {\bf u}(t,.)- \mathbf{v}_{\bf N(\ep)}^\ep\left(t,.\right)\|_{L^2(\Omega_2)}^2  \nn\\
			&~~~\le|B_{\bf N(\ep)}|^{-\mu} \mathcal M^2+\frac{2k^2 a A_1^2}{\beta^2} \la_1^{\frac{2-2\beta}{\beta}}  \int_0^t \exp \Big( 2 |B_{\bf N(\ep)}|^{\frac{1}{\beta} }(a-\eta)   \Big)		\big\| 		\mathbf{u}\left(\eta,.\right)-	\mathbf{v}_{\bf N(\ep)}^\ep\left(\eta,.\right) \big\|_{L^2(\Omega_2)}^2  d\eta.
		\end{align}
		Gronwall's inequality implies that
		\begin{equation}
			\exp \Big(2(a-t) |B_{\bf N(\ep)}|^{\frac{1}{\beta}} \Big)	\| {\bf u}(t,.)- \mathbf{v}_{\bf N(\ep)}^\ep\left(t,.\right)\|_{L^2(\Omega_2)}^2  \le  \underbrace{\exp\Big( \frac{2k^2 a A_1^2t}{\beta^2} \la_1^{\frac{2-2\beta}{\beta}}  \Big)}_{D_1:=D_1(k,a,A_1,\beta)} |B_{\bf N(\ep)}|^{-\mu} \mathcal M^2.
		\end{equation}
		This together with the estimate \eqref{saiso1} leads to
		\begin{align}
			&	{\bf E}	\| 		\mathbf{u}_{\bf N(\ep)}^\ep\left(t,.\right)-	\mathbf{u}\left(t,.\right) \|_{L^2(\Omega_2)}^2\nn\\  &\le 	2 	{\bf E}	\| 		\mathbf{u}_{\bf N(\ep)}^\ep\left(t,.\right)-	\mathbf{v}_{\bf N(\ep)}^\ep\left(t,.\right) \|_{L^2(\Omega_2)}^2 + 2 \| {\bf u}(t,.)- \mathbf{v}_{\bf N(\ep)}^\ep\left(t,.\right)\|_{L^2(\Omega_2)} \nn\\
			&\le 2 C_1 \exp \big( 2 |B_{\bf N(\ep)}|^{\frac{1}{\beta} }t   \big) \Bigg( 2\ep^2 {\bf N}(\ep)  + \frac{\mathcal M_0}{\la_{\bf N(\ep)}^{2\gamma}} \Bigg)+2D_1 \exp \Big(-2(a-t)  |B_{\bf N(\ep)}|^{\frac{1}{\beta}} \Big) |B_{\bf N(\ep)}|^{-\mu} \mathcal M^2\nn\\
		\end{align}
		which completes our proof.
	\end{proof}
	
	The next result provides an error estimate in the Sobolev space $H^q (\Omega_2)$ which is equipped with a
	norm defined by
	\bes
	\|g\|_{H^q(\Omega_2)}^2=  \sum\limits_{p=1}^{\infty}    \la_p^q  \Big<g, \phi_{p}\Big>^{2}.
	\ens
	To estimate the error in the  $H^q$ norm, we need stronger assumption on  solution ${\bf u}$.
	\begin{theorem}
		Suppose that the  problem \eqref{problem}-\eqref{condition}  has unique solution ${\bf u}$ such that
		\begin{equation} \label{assumption2}
			\sum_{ p=1 }^\infty   \exp \Big(2(a-t+r)\la_p^{\frac{1}{\beta}} \Big)  \left \langle	{\bf u}(t,y) , \phi_p \right \rangle^2 \le \mathcal M_1,~~t \in [0,a],
		\end{equation}
		for any $r>0$.
		Let   ${\bf N}(\ep), B_{\bf N(\ep)}$ be as in Theorem \eqref{theorem3.1}.
		Then the following estimate holds
		\begin{align}
			&	{\bf E}\|  	\mathbf{u}_{\bf N(\ep)}^\ep\left(t,.\right)- {\bf u}(t,.)\|_{H^q(\Omega_2)}^2\nn\\
			&\le 4 |B_{\bf N (\ep)}|^q \exp \big( 2 |B_{\bf N(\ep)}|^{\frac{1}{\beta} }t   \big) C_1 \Bigg( 2\ep^2 {\bf N}(\ep)  + \frac{\mathcal M_0}{\la_{\bf N(\ep)}^{2\gamma}} \Bigg)+\mathcal M_1^2 (2D_1+1)|B_{\bf N (\ep)}|^q \exp\Big(-2(a-t+r) |B_{\bf N(\ep)} |^{\frac{1}{\beta}} \Big)
		\end{align}
	\end{theorem}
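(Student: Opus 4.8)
The plan is to reproduce the two-term decomposition used in the proof of Theorem \ref{theorem3.1}, keeping the same auxiliary profile $\mathbf{v}_{\bf N(\ep)}^\ep$ defined in \eqref{vep}, but now measuring the error in the $H^q$-norm. Starting from $\|a+b\|^2\le 2\|a\|^2+2\|b\|^2$ I would write
\[
{\bf E}\|\mathbf{u}_{\bf N(\ep)}^\ep(t,\cdot)-{\bf u}(t,\cdot)\|_{H^q(\Omega_2)}^2\le 2{\bf E}\|\mathbf{u}_{\bf N(\ep)}^\ep(t,\cdot)-\mathbf{v}_{\bf N(\ep)}^\ep(t,\cdot)\|_{H^q(\Omega_2)}^2+2\|{\bf u}(t,\cdot)-\mathbf{v}_{\bf N(\ep)}^\ep(t,\cdot)\|_{H^q(\Omega_2)}^2,
\]
and handle the stochastic term and the deterministic approximation term in turn. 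For the stochastic term the decisive point is that both $\mathbf{u}_{\bf N(\ep)}^\ep$ and $\mathbf{v}_{\bf N(\ep)}^\ep$ carry the cut-off $\mathcal {R}(\la_p,{\bf N}(\ep))$, so their difference is spectrally supported on $\{\la_p\le B_{\bf N(\ep)}\}$, where $\la_p^q\le |B_{\bf N(\ep)}|^q$. Hence the $H^q$-norm is dominated by $|B_{\bf N(\ep)}|^q$ times the $L^2$-norm, and I would simply insert the $L^2$ bound \eqref{saiso1} already proved for $\mathbf{u}_{\bf N(\ep)}^\ep-\mathbf{v}_{\bf N(\ep)}^\ep$ in Theorem \ref{theorem3.1}; multiplying it by $|B_{\bf N(\ep)}|^q$ produces the first term of the claimed bound.

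For the deterministic term I would split the $H^q$-norm at the cut-off level, using that $\mathbf{v}_{\bf N(\ep)}^\ep$ vanishes above $B_{\bf N(\ep)}$:
\[
\|{\bf u}-\mathbf{v}_{\bf N(\ep)}^\ep\|_{H^q(\Omega_2)}^2=\sum_{\la_p\le B_{\bf N(\ep)}}\la_p^q\langle {\bf u}-\mathbf{v}_{\bf N(\ep)}^\ep,\phi_p\rangle^2+\sum_{\la_p> B_{\bf N(\ep)}}\la_p^q\langle {\bf u},\phi_p\rangle^2.
\]
The first (low-frequency) sum is again $\le |B_{\bf N(\ep)}|^q\|{\bf u}-\mathbf{v}_{\bf N(\ep)}^\ep\|_{L^2(\Omega_2)}^2$, and for this $L^2$ quantity I would rerun the Gronwall argument of Theorem \ref{theorem3.1} verbatim, feeding in the stronger hypothesis \eqref{assumption2} in place of the $\la_p^\mu$-weighted one. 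Since $a-t+r>0$ for all $t\in[0,a]$, the tail $\sum_{\la_p>B_{\bf N(\ep)}}\langle {\bf u},\phi_p\rangle^2$ entering that argument is controlled by $\exp(-2(a-t+r)|B_{\bf N(\ep)}|^{1/\beta})\mathcal M_1$, which after Gronwall yields $\|{\bf u}-\mathbf{v}_{\bf N(\ep)}^\ep\|_{L^2(\Omega_2)}^2\le 2D_1\mathcal M_1\exp(-2(a-t+r)|B_{\bf N(\ep)}|^{1/\beta})$.

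The second (high-frequency) sum is where the genuinely new difficulty lies, and I expect it to be the main obstacle, because the polynomial weight $\la_p^q$ now competes with the exponential decay supplied by \eqref{assumption2}. I would multiply and divide by $\exp(2(a-t+r)\la_p^{1/\beta})$,
\[
\sum_{\la_p> B_{\bf N(\ep)}}\la_p^q\langle {\bf u},\phi_p\rangle^2=\sum_{\la_p> B_{\bf N(\ep)}}\la_p^q\, e^{-2(a-t+r)\la_p^{1/\beta}}\,e^{2(a-t+r)\la_p^{1/\beta}}\langle {\bf u},\phi_p\rangle^2,
\]
and observe that, once $B_{\bf N(\ep)}$ exceeds $(q\beta/(2(a-t+r)))^\beta$ (guaranteed by $\lim_{\ep\to0}B_{\bf N(\ep)}=+\infty$), the map $x\mapsto x^q e^{-2(a-t+r)x^{1/\beta}}$ is decreasing on $[B_{\bf N(\ep)},\infty)$. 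The weight is therefore maximised at $x=B_{\bf N(\ep)}$ and can be pulled out as $|B_{\bf N(\ep)}|^q e^{-2(a-t+r)|B_{\bf N(\ep)}|^{1/\beta}}$, leaving precisely the sum bounded by $\mathcal M_1$ in \eqref{assumption2}.

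Collecting the low- and high-frequency contributions gives the second term of the estimate, and adding the stochastic bound from the first two paragraphs completes the argument, with the constants $C_1,D_1$ tracked exactly as in Theorem \ref{theorem3.1}. The point I would emphasise is that the exponentially weighted assumption \eqref{assumption2}, with its strictly positive margin $r>0$, is indispensable: it is what makes the pulling-out of the polynomial weight $|B_{\bf N(\ep)}|^q$ legitimate and still leaves a net exponentially decaying factor, whereas the weaker $\la_p^\mu$ weight of Theorem \ref{theorem3.1} could not absorb $\la_p^q$ on the tail.
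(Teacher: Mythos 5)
Your proposal is correct, and it rests on exactly the same three ingredients as the paper's proof: the stochastic $L^2$ bound \eqref{saiso1}, a re-run of the Gronwall argument for $\|{\bf u}-\mathbf{v}_{\bf N(\ep)}^\ep\|_{L^2(\Omega_2)}$ under the strengthened hypothesis \eqref{assumption2}, and the monotonicity of $z\mapsto z^{q}e^{-2(a-t+r)z^{1/\beta}}$ beyond the cut-off, which lets the polynomial weight $\la_p^{q}$ be absorbed at the price of the exponential margin $r>0$. The structural difference is the anchor of the triangle inequality: you split $\mathbf{u}_{\bf N(\ep)}^\ep-{\bf u}$ through the auxiliary solution $\mathbf{v}_{\bf N(\ep)}^\ep$ of \eqref{vep} directly in the $H^q$-norm, whereas the paper splits through the spectral projection ${\mathcal Q}_{B_{\bf N(\ep)}}{\bf u}$, bounds ${\bf E}\|\mathbf{u}_{\bf N(\ep)}^\ep-{\mathcal Q}_{B_{\bf N(\ep)}}{\bf u}\|_{H^q(\Omega_2)}^2\le |B_{\bf N(\ep)}|^{q}\,{\bf E}\|\mathbf{u}_{\bf N(\ep)}^\ep-{\bf u}\|_{L^2(\Omega_2)}^2$ as in \eqref{er1}, and therefore must first assemble the full $L^2$ error estimate \eqref{er2} before converting to $H^q$, handling the tail separately in \eqref{er3}. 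Your arrangement is marginally cleaner: the term $\mathbf{u}_{\bf N(\ep)}^\ep-\mathbf{v}_{\bf N(\ep)}^\ep$ is purely stochastic and is finished by \eqref{saiso1} alone (both functions carry the cut-off $\mathcal R$, so the $H^q$-to-$L^2$ comparison is immediate), while the deterministic term collects the Gronwall and tail contributions; the constants come out slightly sharper, though neither bookkeeping should be taken literally --- the paper's own steps produce a factor $4D_1+2$ where its statement records $2D_1+1$, and it fluctuates between $\mathcal M_1$ and $\mathcal M_1^2$. One point worth stating explicitly in your write-up: the decreasing-function condition $B_{\bf N(\ep)}\ge \big(q\beta/(2(a-t+r))\big)^{\beta}$ must hold uniformly in $t\in[0,a]$, which is guaranteed at the worst case $t=a$ precisely because $r>0$ and $\lim_{\ep\to 0}B_{\bf N(\ep)}=+\infty$ --- this is the same role for $r$ that the paper invokes when it requires $2rB_{\bf N(\ep)}\ge q$ for $\ep$ small, and your closing remark correctly identifies it as the reason the weaker $\la_p^{\mu}$-weighted assumption of Theorem \ref{theorem3.1} would not suffice here.
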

{
\begin{remark}
	In physical modelling and engineering, the estimation on a Hilbert scale  space, for example $H^q(\Omega)$ is important. Furthermore, the problem of estimating the error in this space more difficult than $L^2(\Omega)$. Hence, the above theorem is a new and interesting result.
\end{remark}	
	
}
	
	\begin{proof}
		First, we have
		\begin{align}
			{\bf E}\|  	\mathbf{u}_{\bf N(\ep)}^\ep\left(t,.\right)-{\mathcal  Q}_{B_{\bf N (\ep)}} {\bf u}(t,.)\|_{H^q(\Omega_2)}^2&= {\bf E} \left( \sum_{ \la_p \le B_{\bf N (\ep) } } \la_j^{q}  \left \langle \mathbf{u}_{\bf N(\ep)}^\ep\left(t,.\right)- {\bf u}(t,.) , \phi_p(y) \right \rangle^2  \right)\nn\\
			& \le |B_{\bf N (\ep)}|^q {\bf E} \left( \sum_{ \la_p \le B_{\bf N (\ep) } }  \left \langle \mathbf{u}_{\bf N(\ep)}^\ep\left(t,.\right)- {\bf u}(t,.) , \phi_p (y) \right \rangle^2  \right)\nn\\
			&\le |B_{\bf N (\ep)}|^q  	{\bf E}\|  	\mathbf{u}_{\bf N(\ep)}^\ep\left(t,.\right)- {\bf u}(t,.)\|_{L^2(\Omega_2)}^2. \label{er1}
		\end{align}
{ where  ${\mathcal  Q}_{B_{\bf N (\ep)}} {\bf u}(t,.)= \sum_{ \la_p \le B_{\bf N (\ep) } }    \left \langle  {\bf u}(t,.) , \phi_p(y) \right \rangle \phi_p (y) $}.
		Under the assumption \eqref{assumption2}, we get
		\begin{eqnarray}
			\begin{aligned}
				&\| {\bf u}(t,.)- \mathbf{v}_{\bf N(\ep)}^\ep\left(t,.\right)\|_{L^2(\Omega_2)}^2\nn\\
				& \le 2  \sum_{ \la_p \le B_{\bf N(\ep)} }  \left[ \int_{0}^{t} (t-\eta)^{\beta-1}   E_{\beta,\beta} \left(\la_p (t-\eta)^\beta\right)  \left\langle G\left(\eta,\cdot,\mathbf{u} \left(\eta,\cdot\right)\right)- G\left(\eta,\cdot,\mathbf{v}_{\bf N(\ep)}^\ep\left(\eta,\cdot\right)\right),\phi_{p}\right\rangle d\eta\right]^2\nn\\
				& +2 \sum_{ \la_p > B_{\bf N(\ep)} } \left \langle	{\bf u}(t,y) , \phi_p \right \rangle^2 \nn\\
				&\le 2 \sum_{ \la_p > B_{\bf N(\ep)} }   \exp \Big(-2(a-t+r)\la_p^{\frac{1}{\beta}} \Big)  \exp \Big(2(a-t+r)\la_p^{\frac{1}{\beta}} \Big)  \left \langle	{\bf u}(t,y) , \phi_p \right \rangle^2  \nn\\
				&+\frac{2k^2 a A_1^2}{\beta^2} \la_1^{\frac{2-2\beta}{\beta}}  \int_0^t \exp \Big( 2 |B_{\bf N(\ep)}|^{\frac{1}{\beta}}  (t-\eta)   \Big)		\big\| 		\mathbf{u}\left(\eta,.\right)-	\mathbf{v}_{\bf N(\ep)}^\ep\left(\eta,.\right) \big\|_{L^2(\Omega_2)}^2  d\eta \\
				&\le \exp \Big(-2(a-t+r) |B_{\bf N(\ep)}|^{\frac{1}{\beta}} \Big)  \mathcal M_1^2\nn\\
				&+ \frac{2k^2 a A_1^2}{\beta^2} \la_1^{\frac{2-2\beta}{\beta}}  \int_0^t \exp \Big( 2   |B_{\bf N(\ep)}|^{\frac{1}{\beta}} (t-\eta)   \Big)		\big\| 		\mathbf{u}\left(\eta,.\right)-	\mathbf{v}_{\bf N(\ep)}^\ep\left(\eta,.\right) \big\|_{L^2(\Omega_2)}^2  d\eta
			\end{aligned}
		\end{eqnarray}
		Multiplying both sides with $\exp \Big(2(a-t)  |B_{\bf N(\ep)}|^{\frac{1}{\beta}} \Big)$, we obtain
		\begin{align}
			&\exp \Big(2(a-t)  |B_{\bf N(\ep)}|^{\frac{1}{\beta}} \Big)	\| {\bf u}(t,.)- \mathbf{v}_{\bf N(\ep)}^\ep\left(t,.\right)\|_{L^2(\Omega_2)}^2  \nn\\
			&~\le \exp \Big(-2r  |B_{\bf N(\ep)}|^{\frac{1}{\beta}} \Big)  \mathcal M_1^2 \nn\\
			&+\frac{2k^2 a A_1^2}{\beta^2} \la_1^{\frac{2-2\beta}{\beta}}  \int_0^t \exp \Big( 2  |B_{\bf N(\ep)}|^{\frac{1}{\beta}} (a-\eta)   \Big)		\big\| 		\mathbf{u}\left(\eta,.\right)-	\mathbf{v}_{\bf N(\ep)}^\ep\left(\eta,.\right) \big\|_{L^2(\Omega_2)}^2  d\eta.
		\end{align}
		Then Gronwall's inequality implies that
		\begin{equation}
		\exp \Big(2(a-t)  |B_{\bf N(\ep)}|^{\frac{1}{\beta}} \Big)	\| {\bf u}(t,.)- \mathbf{v}_{\bf N(\ep)}^\ep\left(t,.\right)\|_{L^2(\Omega_2)}^2  \le   D_1 \exp \Big(-2r  |B_{\bf N(\ep)}|^{\frac{1}{\beta}}  \Big)  \mathcal M_1^2
		\end{equation}
		This latter estimate together with the estimate \eqref{saiso1} leads to
		\begin{align}
			&	{\bf E}	\| 		\mathbf{u}_{\bf N(\ep)}^\ep\left(t,.\right)-	\mathbf{u}\left(t,.\right) \|_{L^2(\Omega_2)}^2\nn\\  &\le 	2 	{\bf E}	\| 		\mathbf{u}_{\bf N(\ep)}^\ep\left(t,.\right)-	\mathbf{v}_{\bf N(\ep)}^\ep\left(t,.\right) \|_{L^2(\Omega_2)}^2 + 2 \| {\bf u}(t,.)- \mathbf{v}_{\bf N(\ep)}^\ep\left(t,.\right)\|_{L^2(\Omega_2)} \nn\\
			&\le  \exp \big( 2   |B_{\bf N(\ep)}|^{\frac{1}{\beta}} t   \big) \Bigg[2 C_1 \Bigg( 2\ep^2 {\bf N}(\ep)  + \frac{\mathcal M_0}{\la_{\bf N(\ep)}^{2\gamma}} \Bigg)+2 D_1 \exp \Big(-2(r+a) |B_{\bf N(\ep)}|^{\frac{1}{\beta}}  \Big)  \mathcal M_1^2\Bigg]. \label{er2}
		\end{align}
		It  follows from \eqref{er1} that
		\begin{align}
			&{\bf E}\|  	\mathbf{u}_{\bf N(\ep)}^\ep\left(t,.\right)-{\mathcal  Q}_{B_{\bf N (\ep)}} {\bf u}(t,.)\|_{H^q(\Omega_2)}^2\nn\\
			& \le  |B_{\bf N (\ep)}|^q \exp \big( 2 B_{\bf N(\ep)}^{\frac{1}{\al} }t   \big) \Bigg[2 C_1 \Bigg( 2\ep^2 {\bf N}(\ep)  + \frac{\mathcal M_0}{\la_{\bf N(\ep)}^{2\gamma}} \Bigg)+2 D_1 \exp \Big(-2(r+a)\la_{\bf N(\ep)}^{\frac{1}{\beta}} \Big)  \mathcal M_1^2\Bigg].
		\end{align}
		On the other hand, consider  the function
		\begin{equation}
			\mathcal G(z)= z^q e^{-D z},~~D>0, \label{ine1}
		\end{equation}
		From the derivative of $\mathcal G$ is $\mathcal G'(z)=z^{q-1} e^{-Dz} (q-Dz) $, we know that
		$\mathcal G $ is strictly decreasing when $Dz \ge q$.   Since $\lim_{\ep \to 0} B_{\bf N(\ep)} =+\infty $, we see that if $\ep$ small enough then  $2r B_{\bf N(\ep)}  \ge q$.  Replacing  $D=2(a-t+r),~z=B_{\bf N(\ep)}  $ into \eqref{ine1}, we obtain
		for $\la_p > B_{\bf N(\ep)}  $
		\be
		\mathcal G(\la_p)=	\la_p^{q}  \exp\Big(-2(a-t+r) \la_p^{\frac{1}{\beta}} \Big)  \le \mathcal G(B_{\bf N(\ep)} )=  |B_{\bf N (\ep)}|^q \exp\Big(-2(a-t+r) |B_{\bf N(\ep)} |^{\frac{1}{\beta}} \Big)
		\en
		The latter equality leads to
		\begin{align}
			\|  	\mathbf{u}\left(t,.\right)-{\mathcal  Q}_{B_{\bf N (\ep)}} {\bf u}(t,.)\|_{H^q(\Omega_2)}^2&=  \sum_{ \la_p > B_{\bf N (\ep) } } \la_p^{q}  \left \langle  {\bf u}(t,y) , \phi_p(y) \right \rangle^2 \nn\\
			&= \sum_{ \la_p > B_{\bf N (\ep) } } 	\mathcal G(\la_p) \exp\Big(2(a-t+r) \la_p^{\frac{1}{\beta}} \Big)  \left \langle  {\bf u}(t,y) , \phi_p(y) \right \rangle^2 \nn\\
			&\le \mathcal G(B_{\bf N(\ep)} )  \sum_{ \la_p > B_{\bf N (\ep) } } 	 \exp\Big(2(a-t+r) \la_p^{\frac{1}{\beta}} \Big)  \left \langle  {\bf u}(t,y) , \phi_p(y) \right \rangle^2  \nn\\
			&\le \mathcal M_1^2 |B_{\bf N (\ep)}|^q \exp\Big(-2(a-t+r) |B_{\bf N(\ep)} |^{\frac{1}{\beta}} \Big)  \label{er3}
		\end{align}
		where we use  the assumption \eqref{assumption2} for  the latter inequality.
		Combining \eqref{er1}, \eqref{er2} and \eqref{er3}, we deduce that
		\begin{align}
			&	{\bf E}\|  	\mathbf{u}_{\bf N(\ep)}^\ep\left(t,.\right)- {\bf u}(t,.)\|_{H^q(\Omega_2)}^2\nn\\
			&\le 2	{\bf E}\|  	\mathbf{u}_{\bf N(\ep)}^\ep\left(t,.\right)-{\mathcal  Q}_{B_{\bf N (\ep)}} {\bf u}(t,.)\|_{H^q(\Omega_2)}^2 +2	\|  	\mathbf{u}\left(t,.\right)-{\mathcal  Q}_{B_{\bf N (\ep)}} {\bf u}(t,.)\|_{H^q(\Omega_2)}^2 \nn\\
			&\le 4 |B_{\bf N (\ep)}|^q \exp \big( 2  |B_{\bf N(\ep)}|^{\frac{1}{\beta}} t   \big) C_1 \Bigg( 2\ep^2 {\bf N}(\ep)  + \frac{\mathcal M_0}{\la_{\bf N(\ep)}^{2\gamma}} \Bigg)+\mathcal M_1^2 (2D_1+1)|B_{\bf N (\ep)}|^q \exp\Big(-2(a-t+r) |B_{\bf N(\ep)} |^{\frac{1}{\beta}} \Big)
		\end{align}
		which completes the proof.
	\end{proof}

\end{document}